\newtheorem{theorem}{Theorem}[section]     
\newtheorem{proposition}[theorem]{Proposition} 
\newtheorem{lemma}[theorem]{Lemma} 
\newtheorem{corollary}[theorem]{Corollary}
\theoremstyle{definition}
\theoremstyle{remark}
\newtheorem{remark}[theorem]{Remark}
\newcommand{\Mgn}{\overline{\mathcal{M}}_{g,n}}
\newcommand{\I}{\mathsf{i}}
\newcommand{\sgn}{\mathop{\mathrm{sgn}}}
\newcommand{\id}{{\mathrm{id}}}
\title[Buryak-Okounkov formula for the $n$-point function]{Buryak-Okounkov formula for the $n$-point function and a new proof of the Witten conjecture}
\author{Alexander Alexandrov}
\address[A. Alexandrov]{Center for Geometry and Physics, Institute for Basic Science (IBS), Pohang 37673, Korea; and Institute for Theoretical and Experimental Physics, 25 Bolshaya Cheryomushkinskaya
Ulitsa, Moscow 117218, Russia}
\email{alex@ibs.re.kr}
\author{Francisco Hern{\'a}ndez Iglesias}
\address[F. Hern{\'a}ndez Iglesias]{Korteweg-de Vriesinstituut voor Wiskunde, 
	Universiteit van Amsterdam, Postbus 94248,
	1090GE Amsterdam, Nederland}
\email{f.hernandeziglesias@uva.nl}
\author{Sergey Shadrin}
\address[S. Shadrin]{Korteweg-de Vriesinstituut voor Wiskunde, 
	Universiteit van Amsterdam, Postbus 94248,
	1090GE Amsterdam, Nederland}
\email{s.shadrin@uva.nl}
\begin{document}

\begin{abstract} We identify the formulas of Buryak and Okounkov for the $n$-point functions of the intersection numbers of psi-classes on the moduli spaces of curves. This allows us to combine the earlier known results and this one into a principally new proof of the famous Witten conjecture / Kontsevich theorem, where the link between the intersection theory of the moduli spaces and integrable systems is established via the geometry of double ramification cycles. 
\end{abstract}

\maketitle

\tableofcontents

\section{Introduction}

The symbol $\langle \prod_{i=1}^n \tau_{d_i} \rangle _g$ denotes the intersection number $\int_{\Mgn} \prod_{j=1}^n\psi_j^{d_j}$. It can be non-zero only if $g\geq 0$, $n\geq 1$, $2g-2+n>0$, $d_1,\dots,d_n\geq 0$, and $\sum_{j=1}^n d_j = 3g-3+n = \dim \Mgn$. Witten conjectured~\cite{WittenConj} that the generating function of these intersection numbers defined as
\[
F:=\sum_{g=0}^n \left\langle \exp\left(\sum_{d=0}^{\infty} \tau_d t_d \right) \right\rangle_g
\]
is the logarithm of the unique tau-function of the Korteweg-de Vries (KdV) hierachy that in addition satisfies the string equation,
\[
\left[\frac{\partial}{\partial t_0} -\sum_{d=0}^\infty t_{d+1} \frac{\partial}{\partial t_d} - \frac{t_0^2}{2}\right] \exp(F) = 0.
\]
The string equation is easy to prove, see~\cite{WittenConj}, so the main part of the conjecture is the equations of the KdV hierarchy. 

This conjecture was first proved by Kontsevich in~\cite{Kontsevich} using the Strebel-Penner ribbon graph model of the moduli space of curves, and later on more proofs have appeared. Mirzakhani~\cite{mirzakhani} used symplectic reduction for the Weil-Peterson volumes of the moduli space, and Okounkov and Pandharipande~\cite{okounkovpand} and Kazarian and Lando~\cite{kazarianlando} used the ELSV formula that connects the intersection theory and Hurwitz numbers. There are more papers where the Witten conjecture / Kontsevich theorem is proved (see e.~g.~\cite{OkounkovMain,MulaseSafnuk,Kazarian,KimLiu,ChenLiLiu,Witten}), 
but on the geometric side they all use one of the ideas mentioned above: the Strebel-Penner ribbon graph model, symplectic reduction, or the ELSV formula for Hurwitz numbers.

In this paper we give a new proof of the Witten conjecture based on a completely different geometric idea than any of the earlier existing proofs: the intersection theory of \emph{double ramification cycles}. More precisely, the full proof that we explain here consists of four big steps, where three of them were already available in the literature, and the fourth missing one is the main subject of this paper:
\begin{enumerate}
\item In~\cite{BSSZ} Buryak \textit{et~al.}~fully described the intersection numbers of the monomials of psi-classes with the double ramification cycles.
\item In~\cite{BuryakMain} Buryak  used the previous result and a relation between the double ramification cycles and the fundamental cycles of the moduli spaces of curves to describe explicitly the so-called $n$-point function $\mathcal{F}_{n}=\mathcal{F}_{n}(x_1,\dots,x_n):=\sum_{g\geq 0}\sum_{d_1,\dots,d_n\geq 0} \langle \prod_{i=1}^n \tau_{d_i} x_i^{d_i} \rangle _g$, $n\geq 1$. 
\item In~\cite{OkounkovMain} Okounkov proved a different explicit formula for
the $n$-point functions $\mathcal{F}_{n}$  and he showed in Section 3 of \emph{op.~cit.}~that the generating function of
their coefficients is the logarithm of the string tau-function of the
KdV hierarchy.
\item In this paper we identify Buryak's and Okounkov's formulas for the $n$-point function, and this makes the sequence of papers \cite{BSSZ} $\to$ \cite{BuryakMain} $\to$ {the present paper} $\to$ \cite[Section 3]{OkounkovMain} a new proof of the Witten conjecture.  
\end{enumerate}

Let us say a few words about the geometric techniques used in~\cite{BSSZ} and~\cite{BuryakMain}. A double ramification cycle $DR_g(a_1,\dots,a_n)$, $a_i\in\mathbb{Z}$, $\sum_{i=1}^n a_i = 0$ is the class of a certain compactification of the locus of the isomorphism classes of smooth curves with marked points $(C_g,x_1,\dots,x_n\in C_g)\in\Mgn$ such that $\sum_{i=1}^n{a_ix_i}$ is the divisor of a meromorphic function $C_g\to\mathbb{C}\mathrm{P}^1$. These cycles inherit rich geometry of the space of maps to $\mathbb{C}\mathrm{P}^1$ and this allows to express the psi-classes restricted to these cycles in terms of the double ramification cycles of smaller dimension, which is in principle enough to compute all intersection numbers of psi-classes with the double ramification cycles. Next, observe that under the projection $\overline{\mathcal{M}}_{g,n+g}\to \Mgn$ that forgets $g$ marked points the push-forward of a double ramification cycle is a multiple of the fundamental cycle of $\Mgn$. This relates the intersection numbers of psi-classes on double ramification cycles to $\langle \prod_{i=1}^n \tau_{d_i} \rangle _g$. There is, of course, a long way from these computational ideas to nice closed formulas derived in~\cite{BSSZ} and~\cite{BuryakMain}.

Let us stress that in the approach of~\cite[Section 3.2]{BuryakMain} it is sufficient to assume that all weights of marked points in double ramification cycles are non-zero integers (for instance, assume that all integer numbers chosen arbitrarily in the beginning of the argument of Buryak are positive). This allows to use only part of the results of~\cite{BSSZ} that concerns the intersection numbers of psi-classes with the double ramification cycles with only non-zero weights. This part of the computation in~\cite{BSSZ} uses nothing but the factorization rules for psi-classes at the points of non-zero weights on double ramification cycles, which work equally well for the double ramification cycles defined via relative stable maps to $\mathbb{C}\mathrm{P}^1$ and the double ramification cycles of admissible covers~\cite{Ionel} (cf.~a discussion in~\cite[Section 2.3]{BSSZ}).

This idea of computation of the intersection numbers has been used in a number of earlier papers, cf.~\cite{ShaUMN03,ShaIMRN03,Sha05,Sha06,Bursha}, and these papers might serve a good source of examples of particular computations. In particular, an explicit algorithm for the computation of all intersection numbers $\langle \prod_{i=1}^n \tau_{d_i} \rangle _g$ is given in~\cite{ShaZvo08}.  
%
%
Exactly the same idea of computation of the intersection numbers of $\psi$-classes is proposed in~\cite[Section 9]{costello}. It is mentioned in~\cite[Section 1.3]{costello} that for the further applications of the results of that paper a required first step is to give a new proof of Witten's conjecture~\cite{WittenConj} using the technique developed there. So, it is precisely what the present paper (combined with~\cite{BSSZ}, \cite{BuryakMain}, and \cite{OkounkovMain}) does.

Finally, to conclude the introduction, let us mention that the $n$-point functions for the intersection numbers of psi-classes have recently been studied from different points of view, see~\cite{EOr,LiuXu,Zhou,bertoladubyang,ZhouEmergent,BH0704,BH0709,AMMP}. The comparison of different formulas and recursive relations for their coefficients is very interesting and usually highly non-trivial, and this paper can also be considered as a step towards unification (see also~\cite{ZhouEmergent}) of the variety of formulas for the $n$-point functions.

\subsection{Organization of the paper} In Section~\ref{sec:BuryakOkounkov} we recall the formulas of Buryak and Okounkov and some statements about these formulas that we use in this paper, and state our main results. In Section~\ref{sec:BuryakRev} we derive an equivalent form of the Buryak formula. In Section~\ref{sec:Principal} we prove that the principal terms in  Buryak and Okounkov formulas coincide. In Section~\ref{sec:diagonal} we prove that all other terms, namely, the so-called diagonal terms needed for a regularization of the principal ones, also coincide in Buryak and Okounkov formulas. 

\subsection{Acknowledgments} We thank A.~Buryak, G.~Carlet, R.~Kramer, D.~Maulik, and A.~Okounkov for useful discussions and correspondence. A.~A. was supported by IBS-R003-D1 and by RFBR grant 17-01-00585. 
 F.~H.~I. and S.~S. were supported by the Netherlands Organization for Scientific Research.  A.A. wishes to thank the KdV Institute for its kind hospitality. 

\section{Buryak and Okounkov formulas} \label{sec:BuryakOkounkov} In this section we recall the formulas for the $n$-point functions in~\cite{BuryakMain} and~\cite{OkounkovMain}. It is convenient to append the intersection numbers by two unstable cases $g=0$ and $n=1,2$. Namely, we assume by definition that $\langle \sum_{d_1\geq 0}\tau_{d_1} x_1^{d_1} \rangle_0 \coloneqq x_1^{-2}$ and $\langle \sum_{d_1,d_2\geq 0}\tau_{d_1} x_1^{d_1} \tau_{d_2} x_2^{d_2} \rangle_0 \coloneqq (x_1+x_2)^{-1}$, and we add these terms to $\mathcal{F}_1$ and $\mathcal{F}_2$, respectively.

\subsection{Formula of Buryak} Let $\zeta(x) \coloneqq e^{x/2} - e^{-x/2}$. Define the function $P_n(a_1,\dots,a_n;x_1,\dots,x_n)$ by $P_1(a_1;x_1) \coloneqq \frac{1}{x_1}$ and for $n\geq 2$ we have
\begin{align}\label{eq:DefinitionP}
P_n(a_1, \dots, a_n; x_1, \dots, x_n) \coloneqq \sum_{\substack{\tau \in S_n\\ \tau(1) = 1 }}
\frac{ 
\prod\limits_{j=2}^{n-1} x_{\tau(j)} 
\prod\limits_{j=1}^{n-1}
\zeta \left( 
	\left(\sum\limits_{k=1}^{j} a_{\tau(k)}\right)  x_{\tau(j+1)} -  a_{\tau(j+1)} \left( \sum\limits_{k=1}^{j} x_{\tau(k)} \right)
 \right) 
} {
	\prod\limits_{j=1}^{n-1} \left(
	a_{\tau(j)}x_{\tau(j+1)} - a_{\tau(j+1)} x_{\tau(j)} 
	\right)
}.
\end{align}
Though it is not obvious from the definition, $P_n$ is a formal power series in all its variables, which is invariant with respect to the diagonal action of the symmetric group $S_n$ on $(a_1,\dots,a_n)$ and $(x_1,\dots,x_n)$, see~\cite[Remarks 1.5 and 1.6]{BSSZ}.

Define the function $\mathcal{F}_n^{\mathsf{Bur}}=\mathcal{F}_n^{\mathsf{Bur}}(x_1,\dots,x_n)$ as the Gaussian integral
\begin{align*}
\mathcal{F}_n^{\mathsf{Bur}}(x_1,\dots,x_n) \coloneqq 
\frac{e^{ \left( \sum_{j=1}^{n} x_j \right)^3/24 }}{\left( \sum_{j=1}^{n} x_j \right) \prod_{j=1}^{n} \sqrt{2 \pi x_j} } \int_{\mathbb{R}^n} \left[\prod_{j=1}^{n} e^{ - \frac{a_j^2}{2 x_j} } da_j \right] P_n(\I a_1, \dots, \I a_n; x_1, \dots, x_n).
\end{align*}

\begin{theorem}[Buryak~\cite{BuryakMain}] \label{thm:buryak}
	For $n\geq 1$ we have $\mathcal{F}_n = \mathcal{F}_n^{\mathsf{Bur}}$.
\end{theorem}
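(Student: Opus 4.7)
The plan is to combine two ingredients from the geometry of double ramification cycles: the BSSZ formula, which computes $\psi$-class intersections on $DR_g(\vec a)$ as an explicit combinatorial expression built from the function $P_n$, and the pushforward principle emphasized in the introduction — dropping $g$ auxiliary marked points via a forgetful map $\pi\colon\overline{\mathcal{M}}_{g,n+g}\to\Mgn$ sends a suitably chosen double ramification cycle to a known multiple of the fundamental class. This reduces each intersection number $\int_{\Mgn}\prod\psi_i^{d_i}$ to a $DR$-integral whose generating function is already packaged in terms of~\eqref{eq:DefinitionP}. The remaining task is then to perform the sum over $g$ and over the auxiliary weights in closed form and to recognize the result as the Gaussian integral defining $\mathcal{F}_n^{\mathsf{Bur}}$.

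Concretely, I would fix weights $a_1,\dots,a_n$ at the $n$ base points and auxiliary weights $k_1,\dots,k_g$ at the $g$ forgotten points, subject to $\sum a_i + \sum k_j=0$, so that $\pi_\ast DR_g(\vec a,\vec k)=c_g(\vec a,\vec k)\,[\Mgn]$ for an explicit polynomial $c_g$ coming from Hain-type computations on the compactified Picard scheme. The projection formula then yields
\[
c_g(\vec a,\vec k)\,\Big\langle \prod_{i=1}^n\tau_{d_i}\Big\rangle_g = \int_{DR_g(\vec a,\vec k)}\prod_{i=1}^n\psi_i^{d_i},
\]
whose generating series over $\vec d$ with weights $\prod x_i^{d_i}$ is, by BSSZ, a sum of products of $\zeta$-functions built out of $P_{n+g}(\vec a,\vec k;\vec x,\vec y)$, with $\vec y$ dummy variables at the forgotten points. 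Because $DR$-integrals depend polynomially on all weights (Hain, Pixton), the auxiliary $k_j$'s may be regarded as formal variables to be integrated out rather than integer parameters.

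The Gaussian integration enters precisely at this last step. Using the classical moment identity $\int_{\mathbb R} a^{2m}e^{-a^2/(2x)}\,da/\sqrt{2\pi x}=(2m-1)!!\,x^m$, one can reinterpret the polynomial-in-$a$ expansion of an $DR$-integral as a Gaussian average; the appearance of $\I a_j$ inside $P_n$ then reflects the analytic continuation of $\zeta(x)=2\sinh(x/2)$ that is forced when the original integer weights are traded for real ones. After matching prefactors, summing over $g$ produces exactly $\mathcal{F}_n^{\mathsf{Bur}}$, with the unstable boundary data $\langle\tau_0\rangle_0$-type contributions absorbed into the factor $(\sum x_j)^{-1}$ and the cubic exponential $e^{(\sum x_j)^3/24}$ emerging from the diagonal pieces of the Gaussian moments.

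The main obstacle is the last collapse. The BSSZ formula for $DR_g(\vec a,\vec k)$ carries a sum over orderings of all $n+g$ marked points, and one must show that integrating out the $g$ auxiliary variables reduces this to the restricted sum over $(n-1)!$ orderings fixing $\tau(1)=1$ that defines $P_n$. In particular, the denominators $a_{\tau(j)}x_{\tau(j+1)}-a_{\tau(j+1)}x_{\tau(j)}$ in~\eqref{eq:DefinitionP} must arise through a nontrivial Wick-contraction identity, and the cubic exponent $(\sum x_j)^3/24$ must appear as an exact resummation of the Gaussian moments of the auxiliary variables against the $\zeta$-factors. Verifying this combinatorial identity — or, equivalently, matching the two generating functions coefficient-by-coefficient in the $x_i$ — is the core analytic content, and is precisely what is carried out in~\cite{BuryakMain}.
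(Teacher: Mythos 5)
First, note that the paper you are working from does not prove Theorem~\ref{thm:buryak} at all: it is imported verbatim from~\cite{BuryakMain}, and the present paper only uses it as one of the four ingredients of the new proof of the Witten conjecture (its own contribution is Theorem~\ref{thm:main}). So the only benchmark is Buryak's original argument, whose strategy is sketched in the introduction: the BSSZ evaluation of $\psi$-integrals over double ramification cycles with non-zero weights, combined with the fact that the pushforward of $DR_g$ under the map $\overline{\mathcal{M}}_{g,n+g}\to\Mgn$ forgetting $g$ points is a multiple of the fundamental class. Your proposal reproduces exactly this high-level strategy, but it does not constitute a proof. The decisive steps --- showing that the sum over genus and over the auxiliary weights of the BSSZ expressions collapses to the restricted sum of $(n-1)!$ orderings defining $P_n$ in~\eqref{eq:DefinitionP}, and that the prefactor $e^{(\sum x_j)^3/24}/\bigl(\sum x_j\bigr)$ and the Gaussian measure $\prod e^{-a_j^2/2x_j}\,da_j/\sqrt{2\pi x_j}$ emerge from this resummation --- are precisely what you identify as ``the core analytic content'' and then defer to~\cite{BuryakMain}. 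Citing the theorem being proved for its own key step is circular; as written, the proposal is an outline of where a proof should go, not a proof.

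There are also concrete inaccuracies in the outline. The multiple $c_g(\vec a,\vec k)$ in $\pi_\ast DR_g(\vec a,\vec k)=c_g\,[\Mgn]$ is not a ``Hain-type polynomial'' requiring the compactified Picard scheme: it is the elementary constant $g!\,k_1^2\cdots k_g^2$, computed as the degree of weighted Jacobi inversion $C^g\to\mathrm{Jac}(C)$ (Hain's formula concerns the class of the DR cycle itself, which is not needed for this pushforward). More importantly, in $\mathcal{F}_n^{\mathsf{Bur}}$ the Gaussian integration variables are the weights $a_1,\dots,a_n$ attached to the $n$ principal marked points, with variance $x_j$, whereas your plan has the Gaussian average arise from ``integrating out'' the $g$ auxiliary weights; you give no mechanism converting discrete sums over integer weights into these particular Gaussian integrals (in Buryak's argument this rests on polynomiality of the DR-integrals in the weights together with a genuine limiting procedure, and it produces the Gaussians in the $a_i$'s, not in the $k_j$'s). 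Likewise the assertion that the cubic exponential comes ``from the diagonal pieces of the Gaussian moments'' and that the reduction of orderings follows from ``a nontrivial Wick-contraction identity'' is exactly the unproved content. To turn this into a proof you would have to carry out that combinatorial resummation explicitly, which is the substance of~\cite[Section 3]{BuryakMain}.
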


\subsection{Formula of Okounkov} Define the function $\mathcal{E}(x_1,\dots,x_n)$ as
\begin{align*}
 \mathcal{E}(x_1,\dots,x_n) \coloneqq \frac{e^{ \left( \sum_{j=1}^{n} x_j^3 \right)/12 }}{\prod_{j=1}^{n} \sqrt{4\pi x_j} } \int_{\mathbb{R}_{\geq 0}^n} \left[\prod_{j=1}^{n} ds_j \right]
 \exp \left({-\sum_{j=1}^n \left(\frac{(s_j-s_{j+1})^2}{4x_j} + \frac{(s_j+s_{j+1})x_j}{2} \right)}\right),
\end{align*}
where $s_{n+1}$ denotes $s_1$. Then the function $\mathcal{E}^{\circlearrowleft}(x_1,\dots,x_n)$ defined as
\begin{align*}
\mathcal{E}^{\circlearrowleft}(x_1,\dots,x_n) \coloneqq \sum_{\sigma\in S_n/\mathbb{Z}_n} 
\mathcal{E}(x_{\sigma(1)},\dots,x_{\sigma(n)})
\end{align*}
is invariant under the $S_n$-action on $(x_1,\dots,x_n)$. 

Denote by $\Pi_n$ the set of all partitions of the set $\{1,\dots,n\}$ into a disjoint union of unordered subsets $\sqcup_{j=1}^\ell I_j$, for all $\ell=1,2\dots,n$. Let $x_I\coloneqq \sum_{j\in I} x_j$, $I\subset \{1,\dots,n\}$. Define the function $\mathcal{G}(x_1,\dots,x_n)$ as
\begin{align*}
\mathcal{G}(x_1,\dots,x_n)\coloneqq \sum_{\sqcup_{j=1}^\ell I_j \in \Pi_n} (-1)^{\ell+1} \mathcal{E}^{\circlearrowleft}(x_{I_1},\dots,x_{I_\ell})
\end{align*}
and the function $\mathcal{F}_n^{\mathsf{Ok}}=\mathcal{F}_n^{\mathsf{Ok}}(x_1,\dots,x_n)$ as
\begin{align*}
\mathcal{F}_n^{\mathsf{Ok}}(x_1,\dots,x_n)\coloneqq  \frac{(2\pi)^{n/2}}{\prod_{j=1}^n\sqrt{x_j}}
\mathcal{G}\left(\frac{x_1}{2^{1/3}},\dots,\frac{x_n}{2^{1/3}}\right).
\end{align*}

\begin{theorem}[Okounkov~\cite{OkounkovMain}] \label{thm:okounkov}
The generating function of the coefficients of $\mathcal{F}_n^{\mathsf{Ok}}$, $n\geq 1$, is the logarithm of the string tau-function of the KdV hierarchy. 
\end{theorem}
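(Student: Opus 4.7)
The plan is to reconstruct Okounkov's argument by recognising $\mathcal{E}$ as a cyclic trace of Airy propagators and then invoking the boson--fermion correspondence. First, one verifies by Gaussian completion of squares that the single-edge kernel
\[
K(s,s';x) \coloneqq \frac{1}{\sqrt{4\pi x}}\exp\!\left(-\frac{(s-s')^2}{4x} - \frac{(s+s')x}{2} + \frac{x^3}{12}\right)
\]
obeys the semigroup identity $\int_{\mathbb{R}} K(s,s'';x_1)\,K(s'',s';x_2)\,ds'' = K(s,s';x_1+x_2)$, so that $K$ is the heat kernel of the Airy Hamiltonian $H = -\partial_s^2+s$ on $L^2(\mathbb{R})$. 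Consequently
\[
\mathcal{E}(x_1,\dots,x_n) = \mathrm{Tr}\bigl(P\,e^{-x_1 H}\,P\,e^{-x_2 H}\,\cdots\,P\,e^{-x_n H}\bigr),
\]
where $P$ is multiplication by the indicator function of $\mathbb{R}_{\geq 0}$. Because $P$ does not commute with $e^{-xH}$, this trace genuinely depends on each $x_j$ individually and not only on their sum.

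Second, diagonalising $H$ in the basis of Airy eigenfunctions $\{\mathrm{Ai}(s+E)\}_{E\in\mathbb{R}}$ recasts the above trace as a signed cyclic convolution of Airy two-point kernels, which is exactly the combinatorial form of the disconnected $n$-point fermionic correlator $\bigl\langle \prod_j \psi(x_j)\,\psi^{\ast}(x_j)\bigr\rangle_W$ evaluated at the \emph{Airy point} $W$ of the Sato Grassmannian, i.e.\ the point whose Baker--Akhiezer function is $\mathrm{Ai}$. The cyclic symmetrisation $\mathcal{E}^{\circlearrowleft}$ restores the full disconnected correlator, the partition sum defining $\mathcal{G}$ is precisely Möbius inversion on the partition lattice converting disconnected to connected correlators, and the prefactor $(2\pi)^{n/2}/\prod_j\sqrt{x_j}$ together with the rescaling $x_j \mapsto x_j/2^{1/3}$ in the definition of $\mathcal{F}_n^{\mathsf{Ok}}$ implements the change of normalisation between the KdV times $t_d$ and the $n$-point variables $x_j$ dual to the descendants $\tau_d$. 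Two standard consequences of Sato theory then conclude: $\mathrm{Ai}$ satisfies the second-order ODE $\mathrm{Ai}''(s)=s\,\mathrm{Ai}(s)$, so $W$ lies in the $2$-reduced Grassmannian and $\tau_W$ is a KdV tau-function; and the translation covariance of the Airy equation implies $L_{-1}$-invariance of $W$, which is equivalent to the string equation and singles out $\tau_W$ as the \emph{string} tau-function.

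The main obstacle is the identification carried out in the second step: one must execute a Laplace transform between the spectral variable $E$ and the position variable $s$ that exchanges the oscillatory Airy representation of the Baker--Akhiezer function with the Gaussian kernel $K$, justify that the restriction $s_j\in\mathbb{R}_{\geq 0}$ (rather than $s_j\in\mathbb{R}$) is exactly what encodes the Airy point inside the Grassmannian --- without it, the trace would collapse to the trivial $\operatorname{Tr} e^{-(\sum_j x_j)H}$ --- and verify that the cubic term $x_j^3/12$ and the linear term $(s_j+s_{j+1})x_j/2$ arise correctly from the matrix elements of $H$ under this transform. Once these identifications are in hand, the KdV and string-equation statements reduce to the standard infinite-wedge machinery.
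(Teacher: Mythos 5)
Your proposal is not being compared against an internal argument: the paper does not prove Theorem~\ref{thm:okounkov} at all, it imports it from \cite[Section 3]{OkounkovMain}, the present paper's contribution being only the identification $\mathcal{F}_n^{\mathsf{Bur}}=\mathcal{F}_n^{\mathsf{Ok}}$. Measured against Okounkov's argument, your first step is correct and standard: $K$ is the heat kernel of $H=-\partial_s^2+s$, so $\mathcal{E}(x_1,\dots,x_n)=\mathrm{Tr}\bigl(Pe^{-x_1H}P e^{-x_2H}\cdots Pe^{-x_nH}\bigr)$ with $P$ the projection onto $\mathbb{R}_{\geq 0}$ (though without $P$ the expression is not ``trivial'' but divergent, since $H$ has purely continuous spectrum on $L^2(\mathbb{R})$).

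The genuine gap is in your second step, and it is not merely a deferred computation: the combinatorial dictionary you invoke is wrong. In the definition of $\mathcal{G}$ the partition sum evaluates $\mathcal{E}^{\circlearrowleft}$ at the \emph{merged} variables $x_{I_j}=\sum_{i\in I_j}x_i$, with sign $(-1)^{\ell+1}$ and no product over blocks; the disconnected-to-connected M\"obius inversion on the partition lattice would instead produce products over blocks, each block keeping its own variables, with coefficients $(-1)^{\ell-1}(\ell-1)!$. Moreover, in the determinantal/fermionic formalism a single signed cyclic sum of two-point kernels is the shape of a \emph{connected} correlator, whereas the disconnected one is a full determinant; so your assignment (cyclic symmetrisation $\leftrightarrow$ disconnected, partition sum $\leftrightarrow$ connected) is inverted, and in the case of the partition sum it is structurally of the wrong type. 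The $\ell<n$ terms are in fact diagonal regularization terms --- Laplace transforms of distributions supported on the diagonals (Okounkov's Sections 2.6.3 and 3.1.4, and exactly how the present paper treats them in Section~\ref{sec:diagonal} via Sokhotski--Plemelj); in operator language they come from commutators of operators whose structure constants \emph{add} the spectral arguments, not from clustering of correlators. Since the identification of the generating function with the fermionic correlators of the Airy point is precisely the content of the theorem, and your own ``main obstacle'' paragraph leaves the required spectral transform and the role of the restriction $s_j\in\mathbb{R}_{\geq 0}$ unverified, what you have is a plausible plan close in spirit to Okounkov's Section 3 and to later Grassmannian treatments (2-reduction from the Airy ODE, string equation from translation invariance, uniqueness of the string tau-function), but not yet a proof: the step that would fail as stated is the claimed M\"obius-inversion reading of $\mathcal{G}$, and the step that is missing is the actual matching of the trace formula with the Airy-point correlators.
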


\subsection{Main theorem} We are ready to state our main result.

\begin{theorem} \label{thm:main} We have: $\mathcal{F}_n^{\mathsf{Bur}} = \mathcal{F}_n^{\mathsf{Ok}}$, $n\geq 1$.  
\end{theorem}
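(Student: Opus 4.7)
The plan is to decompose both sides into pieces indexed by the same combinatorial data -- cyclic orderings of subsets of $\{1,\dots,n\}$ -- and then verify that the analytic contributions agree piece by piece. Observe first that a permutation $\tau\in S_n$ with $\tau(1)=1$ is exactly a cyclic ordering of $\{1,\dots,n\}$ with basepoint $1$, so the sum in $P_n$ is naturally indexed by the same set $S_n/\mathbb{Z}_n$ that controls $\mathcal{E}^{\circlearrowleft}$. This suggests matching each cyclic summand of Buryak's formula with one cyclic summand of $\mathcal{E}^{\circlearrowleft}$ (the $\ell=1$ term of $\mathcal{G}$), while the inclusion-exclusion over partitions $\Pi_n$ in Okounkov's $\mathcal{G}$ should correspond to the regularization of the apparent poles $a_{\tau(j)}x_{\tau(j+1)}-a_{\tau(j+1)}x_{\tau(j)}$ in the Buryak integrand.

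On the Buryak side (Section~\ref{sec:BuryakRev}), I would rewrite the formula by expanding each factor $\zeta(\cdot)=e^{\cdot/2}-e^{-\cdot/2}$ in the numerator of $P_n$ into a signed sum of $2^{n-1}$ exponentials whose exponents are linear in the products $a_k x_l$. The Gaussian integration in the $a_j$'s can then be carried out in closed form, yielding, for each $\tau$ and each sign choice, an expression of the shape (exponential of a quadratic form in the $x_j$'s)$\times$(rational factor in $x_j$'s). The aim is to produce a form in which the spurious poles cancel and the remainder has a shape directly comparable to Okounkov's.

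On the Okounkov side, $\mathcal{E}(x_1,\dots,x_n)$ is itself a Gaussian integral over $s_1,\dots,s_n\geq 0$ whose exponent couples only nearest neighbours in the cyclic order. After completing the square and integrating out the $s_j$'s recursively I expect a sum of exponentials of precisely the quadratic forms that appeared on the Buryak side. The principal-term comparison of Section~\ref{sec:Principal} then amounts to identifying, for every cyclic ordering, the ``regular'' part of the Buryak integrand with one summand of $\mathcal{E}^{\circlearrowleft}$; this should reduce to an algebraic identity on the two quadratic forms together with a bijective relabelling $(a_j)\leftrightarrow(s_j)$ along the cycle.

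The main obstacle, treated in Section~\ref{sec:diagonal}, is the diagonal part. Since $P_n$ is a power series only after cancellation of the apparent poles, the contributions localized on the hypersurfaces $a_{\tau(j)}x_{\tau(j+1)}=a_{\tau(j+1)}x_{\tau(j)}$ give rise to ``diagonal'' pieces which must reproduce exactly the alternating sum $\sum_{\sqcup_{j=1}^{\ell} I_j\in\Pi_n}(-1)^{\ell+1}\mathcal{E}^{\circlearrowleft}(x_{I_1},\dots,x_{I_\ell})$ in $\mathcal{G}$. Geometrically, a cluster of indices with coincident $a$-ratios should be thought of as collapsing into a single block $I_j$ of a partition, and the alternating signs $(-1)^{\ell+1}$ should emerge from a M\"obius-type exclusion-inclusion on the lattice of such collapsings. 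Tracking these residues simultaneously for every pair $(\tau,\pi)\in S_n/\mathbb{Z}_n\times\Pi_n$, with correct signs and matching quadratic exponents, is where I expect the bulk of the combinatorial work to lie and is the step with the least obvious structure.
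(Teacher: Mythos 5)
Your overall architecture is the right one (match the regular part of each cyclic summand of Buryak's formula with a summand of $\mathcal{E}^{\circlearrowleft}$, and account for Okounkov's partition sum through the regularization of the poles in Buryak's integrand), but two of your central steps would not go through as stated. First, the Gaussian integration over the $a_j$'s cannot be ``carried out in closed form'' to give exponentials of quadratic forms times rational functions of the $x_j$'s: the integrand carries the simple-pole denominators $a_{\tau(j)}x_{\tau(j+1)}-a_{\tau(j+1)}x_{\tau(j)}$, and a principal-value Gaussian integral against such a pole is an error-function-type transcendent, not a rational expression; likewise integrating out the $s_j\geq 0$ on the Okounkov side produces iterated (complementary) error functions. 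So the common intermediate object you hope both sides reduce to does not exist. What actually works is a transformation of one integral representation into the other rather than an evaluation: after re-summing $P_n$ one applies the imaginary shift $a_{\sigma(j)}=b_{\sigma(j)}+\frac{\I}{2}x_{\sigma(j)}\bigl(-\sum_{\ell<j}x_{\sigma(\ell)}+\sum_{r>j}x_{\sigma(r)}\bigr)$, which converts $e^{(\sum_j x_j)^3/24}$ into $e^{\sum_j x_j^3/24}$ and moves the poles off the real contour, and then represents each factor $\bigl[-\I\bigl(\frac{b_j}{x_j}+\frac{b_{j+1}}{x_{j+1}}\bigr)+\frac{x_j+x_{j+1}}{2}\bigr]^{-1}$ as a Laplace integral $\int_0^\infty ds_{j+1}\,e^{-s_{j+1}[\cdots]}$; only then is the $b$-integration Gaussian, and it lands exactly on Okounkov's $s$-integral. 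Second, your plan takes for granted that the per-$\tau$, per-sign-choice expansion organizes itself cyclically, but this requires a genuine combinatorial re-summation (Proposition~\ref{prop:newP}, proved via the cycles $C_m$ and the $u_{ij}$-identity): the $2^{n-1}$ signed exponentials attached to the various $\tau$ with $\tau(1)=1$ recombine, \emph{across different} $\tau$, into a single exponential for every $\sigma\in S_n$, and the passage to $S_n/\mathbb{Z}_n$ happens only afterwards, by absorbing the prefactor $1/\sum_j x_j$ into an $n$-fold cyclic product of shifted denominators --- not because permutations fixing $1$ ``are'' cyclic orderings.

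The diagonal part is the other genuine gap: you explicitly defer it as the step ``with the least obvious structure,'' yet this is exactly where a proof is required. In the paper it is handled by interpreting the integrals as Cauchy principal values and applying the Sokhotski--Plemelj formula iteratively: each summand splits into $2^{n-1}$ choices (principal part or half-residue) for the variables $g_i=\frac{a_{\sigma(i)}}{x_{\sigma(i)}}-\frac{a_{\sigma(i+1)}}{x_{\sigma(i+1)}}$; the residue choices collapse blocks of indices into single variables $f_i$ attached to $x_{I_i}$, summing over the permutations preserving the blocks upgrades half-residues to full residues with factors $(2\pi)^{|I_i|-1}$, and the resulting term for each (cyclically ordered) partition reproduces the same cyclic integral with $x_{I_j}$ in place of $x_j$, which by the principal-part computation equals $(-1)^{\ell}\mathcal{E}\bigl(\frac{x_{I_1}}{2^{1/3}},\dots,\frac{x_{I_\ell}}{2^{1/3}}\bigr)$, matching Okounkov's inclusion--exclusion sign for sign. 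Alternatively, one may bypass the diagonal bookkeeping entirely by invoking Okounkov's own observation (used in Remark~\ref{rem:OkounkovArgument}) that the diagonal terms are uniquely determined as regularizations of the principal part, so that the principal-part comparison alone implies the theorem. Your proposal uses neither route, and your suggestion that the signs $(-1)^{\ell+1}$ will ``emerge from a M\"obius-type inclusion--exclusion'' is a hope rather than an argument; as it stands the diagonal matching, and with it the identity $\mathcal{F}_n^{\mathsf{Bur}}=\mathcal{F}_n^{\mathsf{Ok}}$, remains unproved.
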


The rest of the paper is devoted to the proof of this theorem. An immediate corollary of Theorems~\ref{thm:buryak},~\ref{thm:okounkov}, and~\ref{thm:main} is the following:

\begin{corollary} The Witten conjecture is true, that is, the function $\exp(F)$ is the string tau-function of the KdV hierarchy. 	
\end{corollary}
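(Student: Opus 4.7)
The plan is simply to chain together the three theorems already recalled in the introduction: Theorem~\ref{thm:buryak} of Buryak, Theorem~\ref{thm:okounkov} of Okounkov, and the main Theorem~\ref{thm:main} proved in the body of the paper. By Theorem~\ref{thm:buryak}, the $n$-point generating function $\mathcal{F}_n$ of psi-class intersection numbers, augmented by the two unstable conventions $\langle\tau_{d_1}\rangle_0 \coloneqq x_1^{-2}$ and $\langle\tau_{d_1}\tau_{d_2}\rangle_0\coloneqq (x_1+x_2)^{-1}$ fixed at the start of Section~\ref{sec:BuryakOkounkov}, coincides with $\mathcal{F}_n^{\mathsf{Bur}}$ for every $n\geq 1$. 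By Theorem~\ref{thm:main}, $\mathcal{F}_n^{\mathsf{Bur}}=\mathcal{F}_n^{\mathsf{Ok}}$. Hence $\mathcal{F}_n=\mathcal{F}_n^{\mathsf{Ok}}$ as formal series in $x_1,\dots,x_n$. By Theorem~\ref{thm:okounkov}, the generating series assembled out of the coefficients of the $\mathcal{F}_n^{\mathsf{Ok}}$ is the logarithm of the string tau-function of the KdV hierarchy; therefore so is the generating series assembled out of the coefficients of the $\mathcal{F}_n$, which is precisely $F$. That is, $\exp(F)$ is the string tau-function of the KdV hierarchy, which is the Witten conjecture.

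The only small item that deserves to be spelled out in the write-up is the dictionary between the collection of $n$-point functions $\{\mathcal{F}_n\}_{n\geq 1}$ and the single generating series $F$ in the times $t_d$. By definition the coefficient of $\prod_{j=1}^n x_j^{d_j}$ in $\mathcal{F}_n(x_1,\dots,x_n)$ records $\sum_{g\geq 0}\langle \prod_{j=1}^n \tau_{d_j}\rangle_g$, and exponentiating the connected free energy recovers the correct symmetry factors and disconnected contributions that turn the family $\{\mathcal{F}_n\}$ into $\exp(F)$. The two unstable genus-zero corrections appended to $\mathcal{F}_1$ and $\mathcal{F}_2$ are precisely the ones needed so that this dictionary agrees with the KdV framework employed in Section~3 of~\cite{OkounkovMain}; this is the same convention under which Okounkov extracts the tau-function statement from his formula.

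I therefore do not expect any genuine obstacle: the entire substance of the new proof has already been packaged into Theorems~\ref{thm:buryak},~\ref{thm:okounkov}, and~\ref{thm:main}, and the corollary is a one-line consequence of the chain $\mathcal{F}_n = \mathcal{F}_n^{\mathsf{Bur}} = \mathcal{F}_n^{\mathsf{Ok}}$ once this bookkeeping between $n$-point functions and $F$ is acknowledged. The only thing I would be careful about when writing it out is to state explicitly that the unstable corrections on the Buryak side and on the Okounkov side are the same ones used in~\cite{OkounkovMain}, so that the identification of generating series really does pass from $\mathcal{F}_n^{\mathsf{Ok}}$ to $\exp(F)$ without an offset.
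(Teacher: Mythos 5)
Your proposal is correct and follows exactly the paper's route: the corollary is stated there as an immediate consequence of chaining Theorems~\ref{thm:buryak}, \ref{thm:main}, and~\ref{thm:okounkov}, which is precisely your argument. The extra bookkeeping you mention (the unstable conventions and the passage from the family $\{\mathcal{F}_n\}$ to $F$) is consistent with the conventions fixed at the start of Section~\ref{sec:BuryakOkounkov} and with Okounkov's setup, so nothing further is needed.
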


As we explain in the introduction, the real importance of this new proof of the Witten conjecture is that it uses a new way to relate the intersection theory of the moduli space of curves to the theory of integrable hierarchies, based on geometry of double ramification cycles. Otherwise, though Theorem~\ref{thm:main} is interesting by itself, the identity $\mathcal{F}_n = \mathcal{F}_n^{\mathsf{Ok}}$ has an alternative proof in~\cite[Section 2]{OkounkovMain}. 

\section{Buryak formula revisited} \label{sec:BuryakRev} Our first goal is to translate the cumbersome formula of Buryak into something more manageable. Let $w_{jk}\coloneqq (a_jx_k-a_kx_j)/2$ and $u_{jk}\coloneqq a_j/x_j-a_k/x_k$. 

\begin{proposition}\label{prop:newP} For $n\geq 1$ we have:
\begin{align} \label{eq:NewP}
P_n(a_1, \dots, a_n; x_1, \dots, x_n) = \frac{1}{\prod_{i=1}^{n} x_i} \sum_{\sigma \in S_n} \frac{\exp \left( \sum_{i < j} w_{\sigma(i) \sigma(j)} \right)}{\prod_{j=1}^{n-1} u_{\sigma(j) \sigma(j+1)} } 
\end{align}
\end{proposition}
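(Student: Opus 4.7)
The plan is to reduce both formulas to comparable signed sums of exponentials in the weights $w_{jk}$ and then match them term-by-term via a Jacobi-type identity. First, I would substitute $\zeta(A)=e^{A/2}-e^{-A/2}$ into the numerator of \eqref{eq:DefinitionP}, noting that the $j$-th argument of $\zeta$ simplifies to $2\sum_{k=1}^j w_{\tau(k)\tau(j+1)}$ and that the $j$-th denominator factor is $2w_{\tau(j)\tau(j+1)}$. Expanding each $\zeta$ as a signed difference produces
\[
P_n=\frac{1}{2^{n-1}}\sum_{\substack{\tau\in S_n\\ \tau(1)=1}}\sum_{\epsilon\in\{\pm1\}^{n-1}}\Big(\prod_{j=1}^{n-1}\epsilon_j\Big)\frac{\prod_{j=2}^{n-1}x_{\tau(j)}}{\prod_{j=1}^{n-1}w_{\tau(j)\tau(j+1)}}\exp\!\Big(\sum_{k<m}\epsilon_{m-1}w_{\tau(k)\tau(m)}\Big),
\]
while the identity $u_{jk}=2w_{jk}/(x_jx_k)$ rewrites the proposed right-hand side \eqref{eq:NewP} as
\[
\frac{1}{2^{n-1}}\sum_{\sigma\in S_n}\frac{\prod_{j=2}^{n-1}x_{\sigma(j)}}{\prod_{j=1}^{n-1}w_{\sigma(j)\sigma(j+1)}}\exp\!\Big(\sum_{i<j}w_{\sigma(i)\sigma(j)}\Big).
\]

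Next I would introduce the map $\Phi\colon(\tau,\epsilon)\mapsto\sigma$ built by iterated insertion: start from the one-letter sequence $[1]$ and, for $m=2,\dots,n$, append $\tau(m)$ to the right if $\epsilon_{m-1}=+1$ and to the left if $\epsilon_{m-1}=-1$. Using $w_{ab}=-w_{ba}$, a short induction on $m$ shows that $\sum_{k<m}\epsilon_{m-1}w_{\tau(k)\tau(m)}=\sum_{i<j}w_{\sigma(i)\sigma(j)}$ for every $(\tau,\epsilon)$, so the exponentials in the two sums are matched by $\Phi$. For $n\ge 3$ the map $\Phi$ is not a bijection: if $1$ occurs at position $p$ in $\sigma$, then $|\Phi^{-1}(\sigma)|=\binom{n-1}{p-1}$, with preimages in natural bijection with the ``peel sequences'' that reduce $\sigma$ back to $[1]$ by removing one letter at a time from either end.

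Matching the two displayed sums therefore reduces to the identity, for every $\sigma\in S_n$,
\[
\sum_{(\tau,\epsilon)\in\Phi^{-1}(\sigma)}\Big(\prod_{j=1}^{n-1}\epsilon_j\Big)\frac{\prod_{j=2}^{n-1}x_{\tau(j)}}{\prod_{j=1}^{n-1}w_{\tau(j)\tau(j+1)}}=\frac{\prod_{j=2}^{n-1}x_{\sigma(j)}}{\prod_{j=1}^{n-1}w_{\sigma(j)\sigma(j+1)}},
\]
which I would prove by induction on $n$ (equivalently on the position $p$ of $1$ in $\sigma$), using the Jacobi-type identity
\[
x_a w_{bc}+x_b w_{ca}+x_c w_{ab}=0,
\]
immediate from $w_{jk}=(a_jx_k-a_kx_j)/2$ and the multiplicative counterpart of $u_{ab}+u_{bc}+u_{ca}=0$. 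For $p=1$ or $p=n$ the fiber is a singleton (with $\tau=\sigma$ and $\epsilon=(+,\dots,+)$, respectively with $\tau$ the reverse of $\sigma$ and $\epsilon=(-,\dots,-)$) and the identity is tautological after a sign bookkeeping. For $1<p<n$ one groups the preimages by whether the first peel of $\sigma$ is from the left or from the right, obtaining two sums that by induction equal rational expressions attached to the shorter words $(\sigma(1),\dots,\sigma(n-1))$ and $(\sigma(2),\dots,\sigma(n))$; the Jacobi identity applied to the triple $(\sigma(1),\sigma(2),\sigma(n))$ (or the symmetric one at the other end) combines the two expressions into the single right-hand side.

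The main obstacle will be this fiber induction: one must track the $x$-numerator and $w$-denominator factors carefully through the peel so that the Jacobi identity applies to the correct triple at each step, and so that the signs coming from the factors $\prod_j\epsilon_j$ line up. Beyond this bookkeeping, the proof rests on just three simple ingredients: the expansion of $\zeta$ into exponentials, the insertion re-indexing $\Phi$, and the Jacobi identity for the weights $w_{jk}$.
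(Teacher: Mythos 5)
Your reduction steps are correct and in fact follow the same route as the paper's own proof: expanding $\zeta$ turns \eqref{eq:DefinitionP} into the signed sum \eqref{eqn:functionpartition} (your pairs $(\tau,\epsilon)$ are the paper's $(\tau,I,J)$, with $J$ the set of positions where $\epsilon=-1$); your insertion map $\Phi$, the matching of exponents, and the fiber count $|\Phi^{-1}(\sigma)|=\binom{n-1}{p-1}$ are exactly the content of Lemma~\ref{lem:A-A} (the products $C_{m_1}\cdots C_{m_i}$ record which insertions went to the left), and the constant sign $\prod_j\epsilon_j=(-1)^{p-1}$ on each fiber matches $(-1)^{|J|}$. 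So the whole proof hinges on your fiber-sum identity, which is precisely Lemma~\ref{lem:coeffP} rewritten through $u_{jk}=2w_{jk}/(x_jx_k)$.

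The gap is in the proposed induction for that identity. Grouping the fiber by the first peel means grouping by the last inserted letter $\tau(n)\in\{\sigma(1),\sigma(n)\}$, and the truncations $(\tau(1),\dots,\tau(n-1))$ then run over the fiber of the shorter word; but the summand does not factor through that fiber: writing $c$ for the peeled letter and $R$ for your rational coefficient,
\[
R(\tau)=R(\tau(1),\dots,\tau(n-1))\cdot\frac{x_{\tau(n-1)}}{w_{\tau(n-1)\,c}},
\]
and the last letter $\tau(n-1)$ varies over the shorter fiber. Hence the two group sums are not ``rational expressions attached to the shorter words'' obtainable from the plain induction hypothesis, and a single Jacobi identity at the triple $(\sigma(1),\sigma(2),\sigma(n))$ cannot combine them. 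The case $n=3$ is misleading because there the shorter fibers are singletons; already for $\sigma=(2,1,3,4)$ the right-peel group is $R(1,2,3,4)+R(1,3,2,4)$, whose closed form $x_2x_3\,w_{14}/(w_{12}w_{13}w_{24}w_{34})$ involves the new pair $w_{14}$ and requires the quadratic (Pl\"ucker-type) consequence of your relations rather than one three-term identity. To close the argument you must strengthen the induction hypothesis to a weighted fiber sum, e.g.\ a closed formula for $\sum_{\tau'}R(\tau')\,x_{\mathrm{last}(\tau')}/w_{\mathrm{last}(\tau')\,c}$; this is exactly the role played in the paper by the generalized statement with the boundary factor $u_{i,b+1}/u_{1,b+1}$ and the telescoping identity \eqref{eq:u-identity} in the proof of Lemma~\ref{lem:coeffP}, so that part of the work cannot be dismissed as bookkeeping.
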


It is clearly true for $n = 1$ and we prove it below for $n \geq 2$. Now the function $P_n$ is manifestly invariant with respect to the diagonal action of the symmetric group $S_n$ on $(a_1,\dots,a_n)$ and $(x_1,\dots,x_n)$. 

\begin{corollary} We have:
\begin{align}\label{eq:buryak-symmetric}
\mathcal{F}_n^{\mathsf{Bur}}=
\frac{e^{ \frac{1}{24}\left( \sum\limits_{j=1}^{n} x_j \right)^3 }}{\left( \sum\limits_{j=1}^{n} x_j \right) (2 \pi)^{\frac n2} \prod_{j=1}^{n}  x_j^{\frac 32} } \int_{\mathbb{R}^n} \left[\prod_{j=1}^{n} e^{ - \frac{a_j^2}{2 x_j} } da_j \right] \sum_{\sigma \in S_n} \frac{\exp \left( \frac{\I}{2} \sum_{j < k} 
a_{\sigma(j)}x_{\sigma(k)}-a_{\sigma(k)}x_{\sigma(j)}
\right)}{\prod_{j=1}^{n-1} \I \left( \frac{a_{\sigma(j)}}{x_{\sigma(j)}} - \frac{a_{\sigma(j+1)}}{x_{\sigma(j+1)}}\right)  } .
\end{align}
\end{corollary}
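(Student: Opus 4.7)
The Corollary is a direct consequence of Proposition~\ref{prop:newP} by substitution. Both sides of~\eqref{eq:NewP} are power series in $(a_1, \dots, a_n)$ valued in rational functions of $(x_1, \dots, x_n)$, so one can freely set $a_j \mapsto \I a_j$. Under this substitution, $w_{jk} = (a_j x_k - a_k x_j)/2$ picks up a factor $\I$, so the numerator becomes $\exp\bigl(\tfrac{\I}{2} \sum_{j < k}(a_{\sigma(j)} x_{\sigma(k)} - a_{\sigma(k)} x_{\sigma(j)})\bigr)$, exactly matching the argument of the exponential in~\eqref{eq:buryak-symmetric}; likewise $u_{jk}$ becomes $\I(a_j/x_j - a_k/x_k)$, matching the factors in the denominator.

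The remaining step is cosmetic. Plugging the rewritten $P_n(\I a_1, \dots, \I a_n; x_1, \dots, x_n)$ into the definition of $\mathcal{F}_n^{\mathsf{Bur}}$, the prefactor $1/\prod_i x_i$ from~\eqref{eq:NewP} merges with the $\prod_j 1/\sqrt{2\pi x_j}$ outside the integral to give $(2\pi)^{-n/2}\prod_j x_j^{-3/2}$, which is the normalization in~\eqref{eq:buryak-symmetric}. The Gaussian weight $\prod_j e^{-a_j^2/(2x_j)}\, da_j$, the exponential $e^{(\sum_j x_j)^3/24}$, and the factor $1/(\sum_j x_j)$ are all unaffected by the substitution, so the Corollary follows term by term.

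Thus the Corollary itself presents no real obstacle: given Proposition~\ref{prop:newP}, it is a bookkeeping exercise. The substance lies entirely in~\eqref{eq:NewP}, which repackages a sum of $(n-1)!$ terms involving products of $\zeta(y) = e^{y/2} - e^{-y/2}$ into a manifestly $S_n$-symmetric sum over all $n!$ permutations with purely exponential numerators. I would attack that identity by expanding each $\zeta$ factor into a signed sum of two exponentials, producing $(n-1)! \cdot 2^{n-1}$ terms labeled by pairs $(\tau, \epsilon)$ with $\tau(1) = 1$; using the antisymmetry $w_{pq} = -w_{qp}$, each such term matches $\exp(\sum_{i < j} w_{\sigma(i)\sigma(j)})$ for a unique $\sigma \in S_n$, with the sign absorbed. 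Grouping by $\sigma$, the main obstacle is then an algebraic identity for the rational denominators: several $(\tau, \epsilon)$ pairs can map to the same $\sigma$, and their contributions must sum to $1/\bigl(\prod_i x_i \cdot \prod_j u_{\sigma(j)\sigma(j+1)}\bigr)$. This non-trivial identity (already visible at $n = 3$, where it reduces to the check $x_2 w_{13} - x_3 w_{12} = x_1 w_{23}$) would most naturally be handled either by induction on $n$, peeling off one variable at a time, or by partial-fraction decomposition in the $u_{jk}$.
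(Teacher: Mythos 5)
Your substitution argument is exactly how the paper obtains this corollary: it is an immediate consequence of Proposition~\ref{prop:newP}, with $a_j\mapsto \I a_j$ turning $w_{jk}$ and $u_{jk}$ into the displayed numerator and denominator factors, and the prefactor $\prod_j x_j^{-1}$ merging with $\prod_j (2\pi x_j)^{-1/2}$ into $(2\pi)^{-n/2}\prod_j x_j^{-3/2}$, so your bookkeeping is correct and matches the paper. Your closing sketch for proving \eqref{eq:NewP} itself (expanding each $\zeta$ into signed exponentials indexed by $(\tau,\epsilon)$, matching them to permutations $\sigma\in S_n$, and then verifying a rational identity among the $u_{jk}$) is also essentially the strategy the paper carries out in Section~\ref{sec:BuryakRev}.
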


\subsection{Proof of Proposition~\ref{prop:newP}} Assume that $n\geq 2$. Expanding the definition of the function $\zeta$ allows us to rewrite Equation~\eqref{eq:DefinitionP} for $P_n=P_n(a_1, \dots, a_n; x_1, \dots, x_n)$ as

\begin{equation} \label{eqn:functionpartition}
P_n = \frac{1}{\prod_{i=1}^{n} x_i} \sum_{\substack{\tau \in S_n\\ \tau(1) = 1 }} \sum_{\substack{I \sqcup J= \\ \{2,\dots,n\}}}  \frac{
		(-1)^{|J|} \exp 
		\left( \sum_{i \in I} \sum_{\ell = 1}^{i-1} w_{ \tau(\ell) \tau(i) } - \sum_{j \in J} \sum_{\ell = 1}^{j-1} w_{ \tau(\ell) \tau(j) }  
		\right) 
	}{
		\prod_{j=1}^{n-1} u_{\tau(j) \tau(j+1)} 
	} 
\end{equation}

\subsubsection{Exponential terms in the numerators}

In order to identify Equations~\eqref{eq:NewP} and~\eqref{eqn:functionpartition}, we consider for each particular fixed sequence of signs $\sgn(w_{rs})=\pm 1$, $r<s$, all terms in Equations~\eqref{eq:NewP} and~\eqref{eqn:functionpartition} where the numerator is equal to $\exp(A)$, $A=\sum_{r<s} \sgn(w_{rs}) w_{rs}$, and prove that the total coefficient of $\exp(A)$ coincides in both formulas. The symbols $w_{ij}$, $1\leq i,j\leq n$, are understood in the rest of the proof as just formal variables satisfying the relations $w_{ij} + w_{ji} = 0$.

Let $[2,n]$ denote the set $\{2,\dots,n\}$. For $\sigma\in S_n$ and $I\sqcup J = [2,n]$ we define 
\begin{equation*}
A^\sigma_{I,J} \coloneqq \sum_{i\in I}\sum_{\ell=1}^{i-1} w_{\sigma(\ell)\sigma(i)} -
\sum_{j\in J}\sum_{\ell=1}^{j-1} w_{\sigma(\ell)\sigma(j)}.
\end{equation*}
It is a convenient way to keep track of signs in the exponential terms in the numerators of~\eqref{eq:NewP} and~\eqref{eqn:functionpartition}. It is easy to see that 
\begin{itemize}
	\item In Equation~\eqref{eq:NewP} the numerators are indexed by  $\exp(A^\sigma_{[2,n],\emptyset})$, for all $\sigma \in S_n$;
	\item In Equation~\eqref{eqn:functionpartition} the numerators are indexed by 
	$\exp(A^\tau_{I,J})$, for all $\tau \in S_n$ such that $\tau(1)=1$ and for all $I\sqcup J = [2,n]$.
\end{itemize}
So, we have to obtain a full description of all $\sigma,\tau$, and $I \sqcup J$ as above such that $\exp(A^\sigma_{[2,n],\emptyset}) = \exp(A^\tau_{I,J})$.

\subsubsection{Notation for the symmetric group} Decompose $S_n$ as $S_{n-1}\sqcup\left(\sqcup_{i=2}^n S_{n-1} (1,i)\right)$, where $S_{n-1}\subset S_n$ denotes the subgroup of permutations $\tau$ such that $\tau(1)=1$. 

Denote by $C_m$, $m\geq 2$, the cyclic permutation $(1,m,m-1,\dots,2)$. Consider the subset 
$T\subset S_n$ defined as $T\coloneqq \{\id\}\cup \left(\cup_{i=1}^{n-1} \{ C_{m_1}\cdots C_{m_i}\ |\ 2\leq m_1<\cdots <m_i\leq n  \} \right)$. The following lemma implies that it is in fact a disjoint union.

\begin{lemma} We have: $T\cap S_{n-1} = \{\id\}$, and 
\[	
	 \quad T \cap (S_{n-1} (1,i)) = \{C_{m_1}\cdots C_{m_{i-1}}\ |\ 2\leq m_1<\cdots <m_{i-1}\leq n  \}, \quad i \geq 2.
\]
\end{lemma}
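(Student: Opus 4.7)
The plan is to compute the one-line notation of $\sigma := C_{m_1} C_{m_2} \cdots C_{m_{i-1}}$ (for $i \geq 2$) on the initial segment $\{1,2,\dots,i\}$ and then determine the coset to which $\sigma$ belongs by inspecting $\sigma^{-1}(1)$. The decomposition in the paragraph preceding the lemma gives $S_n = S_{n-1} \sqcup \bigsqcup_{i=2}^n S_{n-1}(1,i)$, and since $(1,i)(1)=i$ and elements of $S_{n-1}$ fix $1$, one has $\tau \in S_{n-1}(1,i)$ if and only if $\tau(i)=1$, while $\tau \in S_{n-1}$ if and only if $\tau(1)=1$. Thus identifying the coset reduces to finding where $\sigma$ sends some $k \leq i$ to $1$.

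The main computation is the formula
\[
\sigma(k) = m_{i-k} \quad \text{for } 1 \leq k \leq i-1, \qquad \sigma(i) = 1,
\]
established by applying the cycles from right to left and using $C_m(1) = m$, $C_m(j) = j-1$ for $2 \leq j \leq m$, and $C_m(j) = j$ for $j > m$. Starting from $k$, the first $k-1$ factors $C_{m_{i-1}}, \dots, C_{m_{i-k+1}}$ successively decrement the value by $1$, bringing it down to $1$; the next factor $C_{m_{i-k}}$ sends $1$ to $m_{i-k}$; and every remaining factor $C_{m_j}$ with $j < i-k$ fixes $m_{i-k}$, since $m_j < m_{i-k}$. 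The inequalities $m_j \geq j+1$ (immediate from strict monotonicity and $m_1 \geq 2$) guarantee that the value stays in the decrementing range of each factor used. The boundary case $k=i$ simply uses all $i-1$ factors for decrementing and leaves the final value at $1$.

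With this formula in hand, both equalities follow quickly. Since $\sigma(i)=1$, every element of the set on the right side of the second equality lies in $S_{n-1}(1,i)$, giving the inclusion $\supseteq$. Conversely, any non-identity element of $T$ has the form $C_{m_1}\cdots C_{m_j}$ for some $1 \leq j \leq n-1$, and the formula places it in $S_{n-1}(1,j+1)$; since the cosets are disjoint, membership in $S_{n-1}(1,i)$ forces $j=i-1$. The identity lies in $S_{n-1}$, and no non-identity element $C_{m_1}\cdots C_{m_j}$ does, since $\sigma(1)=m_j \geq 2$; this yields the first equality. As a bonus, the formula exhibits injectivity of $(m_1,\dots,m_{i-1}) \mapsto \sigma$ via the recovery $(m_1,\dots,m_{i-1}) = (\sigma(i-1),\sigma(i-2),\dots,\sigma(1))$, which will be useful later for summing over $T$ without double-counting.

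The main obstacle is just careful index bookkeeping in the iterative computation. One must verify at each step that the current intermediate value lies in the range $\{2,\dots,m_j\}$ on which the next factor $C_{m_j}$ acts by decrement, and that once the value has jumped to some $m_{i-k}$ it is indeed fixed by all subsequent factors. Both checks reduce to the monotonicity $m_1 < m_2 < \cdots$ and the lower bound $m_1 \geq 2$; no further ideas are required.
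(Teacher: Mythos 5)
Your proof is correct, and it shares the paper's logical skeleton: reduce both equalities to the inclusion $\{C_{m_1}\cdots C_{m_{i-1}} \mid 2\leq m_1<\cdots<m_{i-1}\leq n\}\subset S_{n-1}(1,i)$ and then use the disjointness of the cosets in $S_n=S_{n-1}\sqcup\bigl(\sqcup_{i=2}^n S_{n-1}(1,i)\bigr)$. Where you differ is in how that inclusion is verified. The paper argues by induction on the number of factors, using the identity $C_m=(2,m,m-1,\dots,3)(1,2)$ and the rewriting $\tau(1,i)C_{m_i}=\tau'(1,i+1)$, and never exhibits the permutation explicitly; you instead compute the full one-line data $\sigma(k)=m_{i-k}$ for $1\leq k\leq i-1$ and $\sigma(i)=1$, and read off the coset from the criterion that $\sigma\in S_{n-1}(1,i)$ if and only if $\sigma(i)=1$ (which is the right criterion under the paper's right-to-left composition convention, the same convention your step-by-step application of the cycles uses, as one can confirm from the paper's identity for $C_m$). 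Your bookkeeping is sound: $m_j\geq j+1$ keeps each intermediate value inside the decrementing range of the next factor, and strict monotonicity makes all later factors fix $m_{i-k}$. Your route is slightly more explicit and, as a byproduct, gives injectivity of $(m_1,\dots,m_{i-1})\mapsto C_{m_1}\cdots C_{m_{i-1}}$ via the recovery $m_j=\sigma(i-j)$, a statement the lemma itself does not assert (the paper only needs, and only gets here, that products with different numbers of factors lie in different cosets), so this is a small bonus rather than a gap on either side.
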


\begin{proof} Observe that $T=(T\cap S_{n-1})\sqcup\left(\sqcup_{i=2}^n T \cap (S_{n-1} (1,i))\right)$. Hence it is enough to show that $\{\id\}\subset S_{n-1}$ (which is obvious) and  $\{C_{m_1}\cdots C_{m_i}\ |\ 2\leq m_1<\cdots <m_i\leq n  \}  \subset (S_{n-1} (1,i))$, $i\geq 2$.

The latter fact we can prove by induction. 	For $i=2$ we see that $C_m = (2,m,m-1,\dots,3) (1,2)$. Assume we know that for any $2\leq m_1<\cdots <m_{i-1}\leq n$ the product $C_{m_1}\cdots C_{m_{i-1}}$ is equal to $\tau (1,i)$ for some $\tau\in S_{n-1}$. Then for any $2\leq m_1<\cdots <m_{i}\leq n$ we have:
 \begin{align*}
& C_{m_1}\cdots C_{m_i} = \tau (1,i) C_{m_i} 
 =\tau (2,i,i-1,\dots,3)(i+1,m_i,m_i-1,\dots,i+2)(1,i+1)=\tau'(1,i+1),
 \end{align*}
where $\tau'\in S_{n-1}$. Thus $C_{m_1}\cdots C_{m_i} \in S_{n-1}  (1,i+1)$.
\end{proof}

\subsubsection{$A^\sigma_{[2,n],\emptyset}$ versus $A^\tau_{I,J}$} The full description of the correspondences between $A^\sigma_{[2,n],\emptyset}$, $\sigma\in S_n$, and $A^\tau_{I,J}$, $\tau\in S_{n-1}$, $I\sqcup J=[2,n]$, is given by the following lemma.

\begin{lemma} \label{lem:A-A} (1) For any $\tau\in S_{n-1}$, $I\sqcup J=[2,n]$, there exists a $\sigma\in S_n$ such that $A^\sigma_{[2,n],\emptyset}=A^\tau_{I,J}$.
	
(2) For any $\sigma\in S_{n-1}$ the only combination  of $(\tau,I,J)$, where $\tau\in S_{n-1}$ and $I\sqcup J=[2,n]$, such that $A^\tau_{I,J}=A^\sigma_{[2,n],\emptyset}$ is given by $\tau=\sigma$, $I=[2,n]$, $J=\emptyset$. 

(3) For any $\sigma\in S_{n-1}(1,i)$, $i\geq 2$, the complete list of the combinations $(\tau,I,J)$, where $\tau\in S_{n-1}$ and $I\sqcup J=[2,n]$, such that $A^\tau_{I,J}=A^\sigma_{[2,n],\emptyset}$ is indexed by the sequences $2\leq m_1<\cdots m_{i-1} \leq n$, where
\[
\tau = \sigma C_{m_{i-1}}^{-1} \cdots C_{m_1}^{-1}; \quad I=[2,n]\setminus \{m_1,\dots,m_{i-1}\}; \quad J=\{m_1,\dots,m_{i-1}\}.
\]
\end{lemma}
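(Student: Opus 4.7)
The plan is to translate both $A^\sigma_{[2,n], \emptyset}$ and $A^\tau_{I,J}$ into sign patterns on the basis $\{w_{rs}\}_{r<s}$ of antisymmetric variables and then exhibit an explicit bijection realizing the matching. Expanding the definitions, one checks that for any $\sigma \in S_n$ the coefficient of $w_{rs}$ (with $r<s$) in $A^\sigma_{[2,n], \emptyset}$ is $\sgn(\sigma^{-1}(s)-\sigma^{-1}(r))$, so $A^\sigma_{[2,n], \emptyset}$ encodes the linear order on $[n]$ induced by $\sigma$ and the map $\sigma \mapsto A^\sigma_{[2,n], \emptyset}$ is injective on $S_n$. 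Writing $p = \tau^{-1}(r)$, $q = \tau^{-1}(s)$, and $\varepsilon_k := +1$ if $k \in I$, $\varepsilon_k := -1$ if $k \in J$, a parallel expansion shows that the coefficient of $w_{rs}$ in $A^\tau_{I,J}$ equals $\sgn(q-p)\cdot\varepsilon_{\max(p,q)}$. The lemma therefore reduces to producing, for each $(\tau, I, J)$, the unique $\sigma \in S_n$ realizing this signed pattern as a pure linear order and describing the fibers of the resulting map.

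The mechanism is right-multiplication by $C_m$. From $(\tau C_m)(\ell) = \tau(C_m(\ell))$ one sees that the value sequence of $\tau C_m$ is obtained from that of $\tau$ by pulling the entry at position $m$ to the front of the length-$m$ prefix while leaving positions $>m$ fixed. Translating to the $\{w_{rs}\}$ basis, this flips the coefficient of $w_{rs}$ for exactly those pairs $r<s$ with $\max(\tau^{-1}(r),\tau^{-1}(s)) = m$ and leaves all other coefficients unchanged. Given $(\tau, I, J)$ with $J = \{m_1 < m_2 < \cdots < m_{i-1}\}$, I would set
\[
	\sigma := \tau\, C_{m_1} C_{m_2} \cdots C_{m_{i-1}}
\]
and prove by induction on $k$ that after applying $C_{m_1}\cdots C_{m_k}$ the sign pattern of the current sequence coincides with that of $A^\tau_{I\cup\{m_{k+1},\dots,m_{i-1}\},\,\{m_1,\dots,m_k\}}$. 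The increasing order of the $m_k$'s is essential: just before $C_{m_k}$ acts, the elements at positions $1,\dots,m_k-1$ of the current sequence are precisely $\{\tau(j):j<m_k\}$ (only permuted among themselves by the earlier $C_{m_j}$'s), while position $m_k$ still carries $\tau(m_k)$, so the flip caused by $C_{m_k}$ affects exactly the pairs with max-position $m_k$ in the original $\tau$-indexing. This establishes part (1) for every triple.

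Parts (2) and (3) follow by tracking the element $1$. Since $\tau(1)=1$ and $m_k \geq k+1$ ensures that the element $1$ sitting at position $k$ is shifted to position $k+1$ by $C_{m_k}$, a short induction gives that $C_{m_1}\cdots C_{m_k}$ sends $1$ to position $k+1$, so $\sigma^{-1}(1) = |J|+1$. For $\sigma \in S_{n-1}$ this forces $|J|=0$, hence $J=\emptyset$, $I=[2,n]$, and $\tau = \sigma$, giving (2). For $\sigma \in S_{n-1}(1,i)$ with $i \geq 2$ it forces $|J|=i-1$, and each of the $\binom{n-1}{i-1}$ choices of $J = \{m_1 < \cdots < m_{i-1}\} \subseteq [2,n]$ uniquely determines $\tau = \sigma C_{m_{i-1}}^{-1}\cdots C_{m_1}^{-1}$; the positional computation run in reverse gives $\tau(1) = \sigma(i) = 1$, so $\tau \in S_{n-1}$, yielding (3). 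The total count $(n-1)! + \sum_{i=2}^n (n-1)!\binom{n-1}{i-1} = (n-1)!\,2^{n-1}$ matches the number of input triples, confirming exhaustiveness. The main obstacle is the sign-flip bookkeeping of the second paragraph; it is elementary but hinges on processing $J$ in strictly increasing order.
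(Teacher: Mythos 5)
Your proposal is correct and takes essentially the same route as the paper: in both arguments the heart is that multiplying by the cycles $C_{m_1},\dots,C_{m_k}$ with $m_1<\cdots<m_k$ converts $A^\tau_{I,J}$ (with $J=\{m_1,\dots,m_k\}$) into an expression of the form $A^\sigma_{[2,n],\emptyset}$, combined with injectivity of $\sigma\mapsto A^\sigma_{[2,n],\emptyset}$ to get uniqueness and a coset argument to pin down when $\tau\in S_{n-1}$. The only differences are cosmetic: you derive injectivity from the explicit coefficient formula $\sgn(\sigma^{-1}(s)-\sigma^{-1}(r))$ for $w_{rs}$ rather than the paper's left-invariance and uniqueness lemmas, and you determine the coset $S_{n-1}(1,i)$ by tracking the position of the element $1$ under the cycles instead of invoking the paper's separate lemma describing $T\cap\bigl(S_{n-1}(1,i)\bigr)$.
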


\subsubsection{Comparison of the coefficients} The symbols $u_{ij}$, $1\leq i,j\leq n$, are understood in the rest of the proof as just formal variables satisfying the relations $u_{ij} + u_{ji} = 0$ and $u_{ij}+u_{jk}+u_{ki}=0$ for all $i,j,k$. For $\sigma \in S_n$, $n\geq 2$, the symbols $Q(\sigma)$ denotes
\[
Q(\sigma) \coloneqq \frac{1}{u_{\sigma(1)\sigma(2) } u_{\sigma(2)\sigma(3) } \dots u_{\sigma(n-1)\sigma(n) } }.
\]

Up to a factor $1/\prod_{i=1}^n x_i$ (which is a common factor for~\eqref{eq:NewP} and~\eqref{eqn:functionpartition}), the coefficient of $\exp(A^\sigma_{[2,n],\emptyset})$ in~\eqref{eq:NewP} is equal to $Q(\sigma)$. Up to the same factor, the coefficient of $\exp(A^\tau_{I,J})$ is equal to $(-1)^{|J|} Q(\tau)$.
	
\begin{lemma} \label{lem:coeffP} For any $\sigma\in S_{n-1}(1,i)$, $2\leq i\leq n$, we have:
	\begin{equation}\label{eq:idenQ}
		Q(\sigma) = (-1)^{i-1} \sum_{2\leq m_1<\cdots < m_{i-1}\leq n} Q(\sigma C_{m_{i-1}}^{-1} \cdots C_{m_1}^{-1}).
	\end{equation}
\end{lemma}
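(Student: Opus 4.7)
The plan is to interpret the permutations appearing on the right-hand side of \eqref{eq:idenQ} as shuffles, rewrite the claim as a universal identity in formal rational functions, and prove it by an inductive partial-fraction telescoping.

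First, I would analyse how $\tau := \sigma C_{m_{i-1}}^{-1}\cdots C_{m_1}^{-1}$ is built from $\sigma$. Since right-multiplication by $C_m^{-1}$ acts on a sequence $(\sigma(1),\ldots,\sigma(n))$ by moving its first entry to position $m$ while shifting positions $2,\ldots,m$ one step to the left, one computes $\tau$ iteratively by applying the operations in the order $C_{m_{i-1}}^{-1},C_{m_{i-2}}^{-1},\ldots,C_{m_1}^{-1}$ to $\sigma$'s sequence. Using $m_1<\cdots<m_{i-1}$, the entry $\sigma(j)$ becomes the first element of the running sequence after $j-1$ steps and is placed at position $m_{i-j}$ by the $j$-th step; since every subsequent operation has a strictly smaller $m$-index and therefore leaves position $m_{i-j}$ alone, $\sigma(j)$ remains there in $\tau$ for $j=1,\ldots,i-1$. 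Combined with $\tau(1)=1$ from Lemma~\ref{lem:A-A}, this shows that $\tau$ has sequence $(1,\omega)$, where $\omega$ is a shuffle of the reversed prefix $(\sigma(i-1),\ldots,\sigma(1))$ with the suffix $(\sigma(i+1),\ldots,\sigma(n))$, and the assignment $(m_1,\ldots,m_{i-1})\mapsto\omega$ is a bijection onto all such shuffles.

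Second, setting $v_j:=a_j/x_j$ (so that $u_{jk}=v_j-v_k$) and $\Pi(u_1,\ldots,u_r):=\prod_{j=1}^{r-1}(u_j-u_{j+1})^{-1}$, the lemma becomes the universal identity
\[
\Pi(\alpha_1,\ldots,\alpha_k,\beta,\gamma_1,\ldots,\gamma_\ell)=(-1)^k\sum_\omega \Pi(\beta,\omega),
\]
where $k:=i-1$, $\ell:=n-i$, the letters $\alpha_j,\beta,\gamma_j$ are formal variables standing for $v_{\sigma(j)}$, $v_1$, $v_{\sigma(i+j)}$, and $\omega$ ranges over shuffles of $(\alpha_k,\ldots,\alpha_1)$ with $(\gamma_1,\ldots,\gamma_\ell)$. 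I would prove this identity by induction on $k\geq 1$. Group the shuffles $\omega$ by the smaller shuffle $\omega'$ of $(\alpha_k,\ldots,\alpha_2)$ with $(\gamma_1,\ldots,\gamma_\ell)$ obtained by deleting $\alpha_1$ (with the convention $\omega':=(\gamma_1,\ldots,\gamma_\ell)$, $q:=0$, $\omega'_0:=\beta$ in the base case $k=1$). If $\alpha_2$ sits at position $q$ in $\omega'$, the shuffles extending $\omega'$ are parametrised by the insertion position $p+1\in\{q+1,\ldots,n-1\}$ of $\alpha_1$. A direct computation gives $\Pi(\beta,\omega)=\Pi(\beta,\omega')(\omega'_p-\omega'_{p+1})/[(\omega'_p-\alpha_1)(\alpha_1-\omega'_{p+1})]$ when $p<n-2$, and $\Pi(\beta,\omega)=\Pi(\beta,\omega')/(\omega'_{n-2}-\alpha_1)$ when $\alpha_1$ is appended at the end (i.e.\ $p=n-2$). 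Applying the partial-fraction identity
\[
\frac{\omega'_p-\omega'_{p+1}}{(\omega'_p-\alpha_1)(\alpha_1-\omega'_{p+1})}=\frac{1}{\alpha_1-\omega'_{p+1}}-\frac{1}{\alpha_1-\omega'_p}
\]
makes the inner sum over $p$ telescope to $-1/(\alpha_1-\omega'_q)$ once the boundary contribution is included, so that $\sum_{\omega\to\omega'}\Pi(\beta,\omega)=-\Pi(\beta,\omega')/(\alpha_1-\omega'_q)$. Summing over $\omega'$ and invoking the inductive hypothesis for $k-1$ (trivial when $k=1$), the factor $1/(\alpha_1-\omega'_q)$ combined with the sign flip promotes $\Pi(\alpha_2,\ldots,\alpha_k,\beta,\gamma)$ to $\Pi(\alpha_1,\alpha_2,\ldots,\alpha_k,\beta,\gamma)$ (respectively $\Pi(\beta,\gamma)$ to $\Pi(\alpha_1,\beta,\gamma)$ at $k=1$), closing the induction.

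The main technical obstacle is the combinatorial identification of step one: verifying the bijection between $(m_1,\ldots,m_{i-1})$ and shuffles requires careful tracking of how successive compositions of $C_{m_j}^{-1}$ permute positions, in particular checking that each entry, once placed at its final position $m_{i-j}$, is not disturbed by the remaining operations (which have strictly smaller $m$-indices). Once this shuffle interpretation is secured, the rest of the proof amounts to a single partial-fraction telescope applied iteratively.
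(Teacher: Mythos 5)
Your proposal is correct, and I verified its two halves: right-multiplication by $C_m^{-1}$ indeed moves the first entry of the sequence to slot $m$ while shifting slots $2,\dots,m$ left, the entries $\sigma(1),\dots,\sigma(i-1)$ land at positions $m_{i-1},\dots,m_1$ and are never disturbed afterwards (later operations only touch positions up to a strictly smaller index, and all $m_j\geq 2$ so slot $1$ always carries the next unplaced entry, with $\tau(1)=\sigma(i)=1$), so $\tau=\sigma C_{m_{i-1}}^{-1}\cdots C_{m_1}^{-1}$ is exactly $(1,\omega)$ with $\omega$ ranging bijectively over the shuffles you describe; and your partial-fraction telescope over the insertion position of $\alpha_1$, including the boundary term and the $k=1$ base case, closes the induction with the right sign. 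This is, however, a genuinely different organization from the paper's proof of Lemma~\ref{lem:coeffP}. The paper never describes the permutations $\sigma C_{m_{i-1}}^{-1}\cdots C_{m_1}^{-1}$ explicitly: it reduces to $\sigma=\id$ by left-invariance of \eqref{eq:idenQ} and proves, by induction on $i$, a strengthened statement in which the range of the $m_j$'s is truncated to $m_j\leq b$ and a correction factor $u_{i,b+1}/u_{1,b+1}$ appears when $b<n$; the induction step peels off the largest cycle index as the outer summation and evaluates the resulting one-variable sum with the auxiliary identity \eqref{eq:u-identity}. You instead make the combinatorics explicit (the shuffle interpretation — legitimate, since $u_{jk}=a_j/x_j-a_k/x_k$ is a difference of independent variables, so the claim becomes a universal identity in the $v_j$) and then invoke what is essentially the classical shuffle identity for chains of simple differences, summing over the largest $m$ first via the inner telescope and inducting on the outer sum — the transpose of the paper's induction. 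Your route buys transparency and self-containedness (no need to guess the strengthened hypothesis with the parameter $b$, and the algebraic core is a standard telescoping identity); the paper's route buys economy by avoiding the delicate bookkeeping of how successive right-multiplications rearrange the sequence, which is precisely the step you flag as the main technical obstacle and which must be carried out carefully for your argument to stand.
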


Lemma~\ref{lem:A-A} and Lemma~\ref{lem:coeffP} together imply that the right hand side of Equation~\eqref{eq:NewP} is equal to the right hand side of Equation~\eqref{eqn:functionpartition}, which completes the proof of Proposition~\ref{prop:newP}.

\subsection{Technical lemmas} In this section we prove Lemma~\ref{lem:A-A} and Lemma~\ref{lem:coeffP} used in the proof of Proposition~\ref{prop:newP}.

\subsubsection{Proof of Lemma~\ref{lem:A-A}} The proof is based on several observations. 
First, observe the left invariance of the identities for $A^\sigma_{I,J}$:
\begin{lemma}\label{lem:leftmultA} We have: $A^{\sigma}_{[2,n],\emptyset}=A^{\id}_{I, J}$ implies $A^{\rho\sigma}_{[2,n],\emptyset}=A^{\rho}_{I, J}$ for any $\rho\in S_n$.
\end{lemma}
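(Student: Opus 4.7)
The plan is to observe that the statement is essentially a trivial consequence of left-invariance of the defining formula of $A^\sigma_{I,J}$ under relabelling of the formal variables $w_{ij}$.

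First I would note that both sides of any identity $A^{\sigma}_{[2,n],\emptyset}=A^{\id}_{I,J}$ live in the free abelian group on symbols $w_{ij}$, $1\le i,j\le n$, modulo the relations $w_{ij}+w_{ji}=0$. For any $\rho\in S_n$ the rule $w_{ij}\mapsto w_{\rho(i)\rho(j)}$ preserves those antisymmetry relations, so it extends to a well-defined linear endomorphism $\phi_\rho$ of this module. Direct inspection of the definition
\[
A^\sigma_{I,J} = \sum_{i\in I}\sum_{\ell=1}^{i-1} w_{\sigma(\ell)\sigma(i)} - \sum_{j\in J}\sum_{\ell=1}^{j-1} w_{\sigma(\ell)\sigma(j)}
\]
then shows that $\phi_\rho(A^\sigma_{I,J}) = A^{\rho\sigma}_{I,J}$ for every $\sigma\in S_n$ and every partition $I\sqcup J = [2,n]$; the subsets $I,J$ and the ranges of $\ell$ are untouched, and each term $w_{\sigma(\ell)\sigma(\cdot)}$ is sent to $w_{(\rho\sigma)(\ell)(\rho\sigma)(\cdot)}$.

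Having established this equivariance, I would simply apply $\phi_\rho$ to both sides of the assumed identity $A^{\sigma}_{[2,n],\emptyset}=A^{\id}_{I,J}$. The left-hand side becomes $A^{\rho\sigma}_{[2,n],\emptyset}$ and the right-hand side becomes $A^{\rho\cdot\id}_{I,J}=A^{\rho}_{I,J}$, which is exactly the desired conclusion. There is no genuine obstacle here: the only point to check is that $\phi_\rho$ is compatible with the antisymmetry relation $w_{ij}+w_{ji}=0$, which is immediate from $w_{\rho(i)\rho(j)}+w_{\rho(j)\rho(i)}=0$. The real content of the surrounding argument lies in combining this invariance with a careful enumeration of the triples $(\tau,I,J)$ that match a given $\sigma$, which is the subject of Lemma~\ref{lem:A-A}; this lemma just provides the reduction that lets one verify all the equalities $A^{\sigma}_{[2,n],\emptyset}=A^{\tau}_{I,J}$ after conjugating everything to the case $\sigma=\id$.
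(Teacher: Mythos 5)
Your proof is correct and matches the paper's approach: the paper disposes of this lemma with ``direct inspection of signs,'' and your argument is exactly that inspection made explicit, namely that the relabelling $w_{ij}\mapsto w_{\rho(i)\rho(j)}$ is well defined modulo antisymmetry and sends $A^{\sigma}_{I,J}$ to $A^{\rho\sigma}_{I,J}$, so applying it to both sides of the assumed identity gives the claim.
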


\begin{proof}
	Direct inspection of signs.
\end{proof}

Second, we have uniqueness:
\begin{lemma} \label{lem:uniqueA}
	The equality $A^\sigma_{[2,n],\emptyset}=A^{\id}_{I,J}$ considered as an equation for $\sigma$ has at most one solution.
\end{lemma}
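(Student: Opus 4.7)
The plan is to show that $A^\sigma_{[2,n],\emptyset}$, viewed as a linear combination of the antisymmetric symbols $\{w_{ij}\}$, already uniquely determines $\sigma$; once this is established, uniqueness of the solution of $A^\sigma_{[2,n],\emptyset}=A^{\id}_{I,J}$ is an immediate consequence.

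First I would unfold the definition to get
\[
A^\sigma_{[2,n],\emptyset}=\sum_{1\le \ell<i\le n} w_{\sigma(\ell)\sigma(i)},
\]
and then re-express this sum in the basis $\{w_{ab}\}_{a<b}$ using $w_{ba}=-w_{ab}$. For each pair $a<b$ in $\{1,\dots,n\}$ there is exactly one pair of indices $(\ell,i)$ with $\ell<i$ and $\{\sigma(\ell),\sigma(i)\}=\{a,b\}$; hence the coefficient of $w_{ab}$ in $A^\sigma_{[2,n],\emptyset}$ equals $+1$ if $\sigma^{-1}(a)<\sigma^{-1}(b)$ and $-1$ otherwise. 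In other words, $A^\sigma_{[2,n],\emptyset}$ records, for every pair of values $a<b$, which of the two is listed earlier in the sequence $\sigma(1),\sigma(2),\dots,\sigma(n)$.

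Next I would note that this family of pairwise comparisons is equivalent to specifying the full linear order in which the values $1,2,\dots,n$ appear in $\sigma(1),\sigma(2),\dots,\sigma(n)$, which in turn determines the permutation $\sigma$ uniquely. Consequently, if $\sigma,\sigma'\in S_n$ both satisfy $A^{\sigma}_{[2,n],\emptyset}=A^{\sigma'}_{[2,n],\emptyset}$, they must produce identical sign patterns across all pairs $\{a,b\}$ and hence coincide. Applied to two hypothetical solutions of $A^\sigma_{[2,n],\emptyset}=A^{\id}_{I,J}$, this yields the lemma.

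There is essentially no substantive obstacle; the only point requiring some care is the use of the antisymmetry $w_{ij}+w_{ji}=0$ when collecting terms, to confirm that each $w_{ab}$ with $a<b$ really appears with a nonzero $\pm 1$ coefficient rather than accidentally cancelling. This is automatic since each unordered pair $\{a,b\}$ is hit by exactly one index pair $\ell<i$, so no cancellation can occur.
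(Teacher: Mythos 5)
Your proof is correct, and it takes a more direct route than the paper's. You prove the stronger fact that $\sigma\mapsto A^\sigma_{[2,n],\emptyset}$ is injective: unfolding the definition gives $A^\sigma_{[2,n],\emptyset}=\sum_{1\le \ell<i\le n} w_{\sigma(\ell)\sigma(i)}$, and since the $w_{ij}$ are formal variables subject only to $w_{ij}+w_{ji}=0$, the elements $w_{ab}$ with $a<b$ are independent, so the coefficient of each $w_{ab}$ (which is $+1$ or $-1$ according to whether $a$ precedes $b$ in $\sigma(1),\dots,\sigma(n)$, with no possible cancellation as you note) records the full set of pairwise precedences and hence reconstructs $\sigma$; uniqueness of solutions of $A^\sigma_{[2,n],\emptyset}=A^{\id}_{I,J}$ is then immediate. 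The paper instead assumes two solutions $\sigma,\rho$, applies the left-invariance Lemma~\ref{lem:leftmultA} twice to reduce to $A^{\rho^{-1}\sigma}_{[2,n],\emptyset}=A^{\id}_{[2,n],\emptyset}$, and concludes $\rho^{-1}\sigma=\id$ --- a final step that implicitly relies on exactly the sign-pattern (inversion) observation you make explicit, but only at the identity. Your argument is self-contained and does not need Lemma~\ref{lem:leftmultA}, while the paper's version is shorter on the page because it reuses the translation structure already set up for the neighbouring lemmas; both are valid.
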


\begin{proof}  Assume we have two solutions, $\sigma$ and $\rho$, that is, $A^\sigma_{[2,n],\emptyset}=A^{\id}_{I,J}=A^\rho_{[2,n],\emptyset}$. Applying Lemma~\ref{lem:leftmultA} twice, we obtain: $A^{\rho^{-1}\sigma}_{[2,n],\emptyset} = A^{\rho^{-1}}_{I,J} = A^{\id}_{[2,n],\emptyset}$. Hence $\rho^{-1}\sigma = \id$. 
\end{proof}

Finally, we can solve this equation:
\begin{lemma} \label{lem:A-solution}
	For any $2\leq m_1<\dots<m_i\leq n$, we have $A^{C_{m_1}\cdots C_{m_i}}_{[2,n],\emptyset}=A^{\id}_{I,J}$, where $J=\{m_1,\dots,m_i\}$.
\end{lemma}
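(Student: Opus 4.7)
The proof will proceed by induction on $i$, carried out in two parts. First, I will establish an explicit description of $\sigma_i \coloneqq C_{m_1} \cdots C_{m_i}$ as a permutation of $\{1, \dots, n\}$: writing $I = [2,n] \setminus J = \{j_1 < \cdots < j_{n-i-1}\}$, one has
\[
\sigma_i(k) = m_{i+1-k} \text{ for } 1 \le k \le i, \qquad \sigma_i(i+1) = 1, \qquad \sigma_i(i+1+r) = j_r \text{ for } 1 \le r \le n-i-1.
\]
The base case $i=0$ is immediate. For the induction step, write $\sigma_i = \sigma_{i-1} \cdot C_{m_i}$ and apply the inductive formula to $\sigma_{i-1}$ after the cyclic shift $C_{m_i}$, treating the three ranges $\{1\}$, $[2, m_i]$, $(m_i, n]$ of inputs separately. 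The only delicate point is that $m_i$ is the $(m_i - i)$-th element of $I' \coloneqq [2,n] \setminus \{m_1, \dots, m_{i-1}\}$, and this determines precisely how the list $j'_r$ (for step $i-1$) is related to the list $j_r$ (for step $i$) after $m_i$ is removed from $I'$.

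Second, I deduce the identity $A^{\sigma_i}_{[2,n], \emptyset} = A^{\id}_{I,J}$ from the explicit formula. Reindexing $\sum_{k<\ell} w_{\sigma(k)\sigma(\ell)}$ by the pair $(a, b) = (\sigma(k), \sigma(\ell))$ and using $w_{ab} + w_{ba} = 0$, one obtains
\[
A^\sigma_{[2,n], \emptyset} = \sum_{a < b} \epsilon_\sigma(a, b)\, w_{ab}, \qquad \epsilon_\sigma(a, b) \coloneqq \sgn\bigl(\sigma^{-1}(b) - \sigma^{-1}(a)\bigr),
\]
so that $A^{\sigma}_{[2,n], \emptyset} - A^{\id}_{[2,n], \emptyset} = -2 \sum_{(a,b) \in \mathrm{Inv}(\sigma^{-1})} w_{ab}$, where $\mathrm{Inv}$ denotes the inversion set. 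From the explicit formula above, $\sigma_i^{-1}$ sends $\{1\} \cup I$ order-preservingly to $\{i+1, \dots, n\}$ and sends $J$ order-reversingly to $\{i, i-1, \dots, 1\}$; a short case check (pairs lying both in $\{1\}\cup I$, both in $J$, or straddling the two) identifies $\mathrm{Inv}(\sigma_i^{-1})$ with $\{(a, b) : a < b,\ b \in J\}$. Since $A^{\id}_{I, J} - A^{\id}_{[2,n], \emptyset} = -2 \sum_{a < b,\ b \in J} w_{ab}$ directly from the definition of $A^{\id}_{I,J}$, the two sides of the lemma agree.

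The main technical hurdle is the first step---verifying the inductive formula for $\sigma_i$---because it requires careful index bookkeeping around the position of $m_i$ inside $I'$. Once that formula is in hand, the second step reduces to a clean accounting of inversions; no further analytic or geometric input is needed.
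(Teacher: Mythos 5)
Your argument is correct, but it takes a genuinely different route from the paper's. The paper proves the lemma by induction directly on the identity for $A$: the inductive step rewrites $A^{C_{m_1}\cdots C_{m_{i+1}}}_{[2,n],\emptyset}$ as $A^{C_{m_1}}_{[2,n]\setminus\{m_2,\dots,m_{i+1}\},\{m_2,\dots,m_{i+1}\}}$ via the left-invariance Lemma~\ref{lem:leftmultA}, and then only tracks how the single cycle $C_{m_1}$ reorders indices (the only pairs $k<j$ with $C_{m_1}(k)>C_{m_1}(j)$ being those with $k=1$). You never invoke Lemma~\ref{lem:leftmultA}; instead you first establish the closed one-line form $C_{m_1}\cdots C_{m_i}=[m_i,\dots,m_1,1,j_1,\dots,j_{n-i-1}]$ — your bookkeeping that $m_i$ is the $(m_i-i)$-th element of $I'$ is indeed the only delicate point, and the formula is consistent with the paper's composition convention (in which $C_{m_i}$ acts first) — and then you use the structural identity $A^{\sigma}_{[2,n],\emptyset}-A^{\id}_{[2,n],\emptyset}=-2\sum_{(a,b)\in\mathrm{Inv}(\sigma^{-1})}w_{ab}$ together with $A^{\id}_{I,J}-A^{\id}_{[2,n],\emptyset}=-2\sum_{a<b,\ b\in J}w_{ab}$, reducing the lemma to checking $\mathrm{Inv}(\sigma_i^{-1})=\{(a,b):a<b,\ b\in J\}$, which your case analysis does correctly. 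The trade-off: the paper's proof is shorter because Lemma~\ref{lem:leftmultA} is already in place and reused elsewhere, whereas your inversion-set criterion is self-contained and more transparent — it characterizes exactly which $\sigma$ satisfy $A^{\sigma}_{[2,n],\emptyset}=A^{\id}_{I,J}$, so it would also deliver the uniqueness statement of Lemma~\ref{lem:uniqueA} (and hence the assembled Lemma~\ref{lem:A-A}) at no extra cost, at the price of the explicit index bookkeeping for the product of the cycles.
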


\begin{proof} We prove it by induction on $i$. The base case $i=0$ is trivial. Assume we know it for $i$. Then, for $i+1$ we have:
\begin{align*}
A^{C_{m_1}\cdots C_{m_{i+1}}}_{[2,n],\emptyset} & =A^{C_{m_1}}_{[2,n]\setminus \{m_2,\dots,m_{i+1}\},\{m_2,\dots,m_{i+1}\}} \\
& = \sum_{\substack {j\not\in \{m_2,\dots,m_{i+1}\} \\ k<j } } w_{C_{m_1}(k),C_{m_1}(j)}
- \sum_{\substack {j\in \{m_2,\dots,m_{i+1}\} \\ k<j } } w_{C_{m_1}(k),C_{m_1}(j)}.
\end{align*}
Since $C_{m_1}$ acts only on $1,\dots,m_1$, it doesn't affect the second sum and the part of the first sum for $j>m_1$. Since it is a cycle, the only terms when $k<j$ and $C_{m_1}(k)>C_{m_1}(j)$ hold simultaneously are the terms with $k=1$. Hence this total expression is equal to
\begin{align*}
\sum_{\substack {j\not\in \{m_1,m_2,\dots,m_{i+1}\} \\ k<j } } w_{k,j} - \sum_{\substack {k<m_1 } } w_{k,m_1}
- \sum_{\substack {j\in \{m_2,\dots,m_{i+1}\} \\ k<j } } w_{k,j} = A^{\id}_{[2,n]\setminus \{m_1,\dots,m_{i+1}\},\{m_1,\dots,m_{i+1}\}} .
\end{align*}
\end{proof}

Now we are ready to prove Lemma~\ref{lem:A-A}. The first statement follows from Lemmas~\ref{lem:leftmultA} and~\ref{lem:A-solution}. Then, note that Lemmas~\ref{lem:A-solution} and~\ref{lem:uniqueA} imply that the equality $A^{\sigma}_{[2,n],\emptyset}=A^{\tau}_{I,J}$ can hold only for $\tau^{-1}\sigma=C_{m_1}\cdots C_{m_i}$, where $J=\{m_1<\cdots <m_i\}$ (and $\tau^{-1}\sigma = \id$ if $J=\emptyset$). Hence $\tau=\sigma C_{m_i}^{-1}\cdots C_{m_1}^{-1}$.

\subsubsection{Proof of Lemma~\ref{lem:coeffP}} First, observe that the basic properties of $u_{ij}$ imply the following identity that we'll use in the proof (one can prove it by induction on $r$, for instance):
\begin{equation}\label{eq:u-identity}
\sum_{m = r+1}^{n-1}  \frac{u_{1,r+1} u_{m,m+1}} {u_{m,1} u_{1, m+1}} + \frac{u_{1, r+1}}{u_{n,1}} = -1.
\end{equation}

Second, observe that Equation~\eqref{eq:idenQ} is invariant under the left products with any $\rho\in S_n$, so it is sufficient to prove it for $\sigma=\id$. We, however, prove a more general statement. Namely, for any $1\leq i \leq b \leq n$ we prove that
\[
\sum_{2\leq m_1<\cdots < m_{i-1}\leq b} Q(C_{m_{i-1}}^{-1} \cdots C_{m_1}^{-1})
= \begin{cases}
(-1)^{i-1} Q(\id) & b=n; \\
(-1)^{i-1} Q(\id) \frac{u_{i,b+1}}{u_{1,b+1}} & b < n.
\end{cases}
\]
This can be proved by induction on $i$, with the case $i=1$ being obvious. Assume this statement is proved for $i$. Then for $i+1$ we have (the computation is completely analogous in the cases $b=n$ and $b<n$, so we perform it only in the first case):
\begin{align*}
& \sum_{2\leq m_1<\cdots < m_{i}\leq n} Q( C_{m_{i}}^{-1} \cdots C_{m_1}^{-1}) 
= \sum_{m_{i}=i+1}^n \sum_{2\leq m_1<\cdots < m_{i-1}\leq m_{i}-1} Q( C_{m_{i}}^{-1} C_{m_{i-1}}^{-1}\cdots C_{m_1}^{-1}) \\
& = \sum_{m_{i}=i+1}^n (-1)^{i-1}Q(C_{m_i}^{-1}) \frac{u_{C_{m_i}^{-1}(i),C_{m_i}^{-1}(m_i)}}{u_{C_{m_i}^{-1}(1),C_{m_i}^{-1}(m_i)}} 
= (-1)^{i-1} Q(\id) \left(\sum_{m_{i}=i+1}^{n-1} \frac{u_{1,2}u_{m_i,m_i+1}}{u_{m_i,1}u_{1,m_i+1}}  + \frac{u_{1,2}}{u_{n,1}}\right) \frac{u_{i+1,1}}{u_{2,1}} \\
& = (-1)^{i} Q(\id).
\end{align*}
Here the second equality is the induction assumption, and the final equality follows from Equation~\eqref{eq:u-identity}.

\section{The principal terms} \label{sec:Principal}
Recall a reformulation of the formula for $\mathcal{F}_n^{\mathsf{Ok}}$ proposed in~\cite[Equation (3.3)]{OkounkovMain}:
\begin{align} \label{eq:ok-principal}
\mathcal{F}_n^{\mathsf{Ok}}= \frac{(-1)^{n+1}(2\pi)^{n/2}}{\prod_{j=1}^n\sqrt{x_j}}
\mathcal{E}^{\circlearrowleft}\left(\frac{x_1}{2^{1/3}},\dots,\frac{x_n}{2^{1/3}}\right)
+ \mathrm{diagonal\ terms}.
\end{align}
The idea behind this formula is that the whole expression for $\mathcal{F}_n^{\mathsf{Ok}}$ can be considered as the regularization of its principal part, which is the first summand on the right hand side of Equation~\eqref{eq:ok-principal}, by the terms that are Laplace transforms of distributions supported on the diagonals, see~\cite[Sections 2.6.3 and 3.1.4]{OkounkovMain}. 

The formula of Buryak, in the form of Equation~\eqref{eq:buryak-symmetric}, can also be represented as the sum of its principal part and the regularizing terms supported on the diagonals. Firstly, we interpret the integrals as Cauchy principal values in order to interchange $\int_{\mathbb{R}^n}$ and $\sum_{\sigma\in S_n}$ in Equation~\eqref{eq:buryak-symmetric}. We obtain:
\begin{align}\label{eq:buryak-principal}
\mathcal{F}_n^{\mathsf{Bur}}= & \sum_{\sigma \in S_n}
\frac{e^{ \frac{1}{24}\left( \sum\limits_{j=1}^{n} x_j \right)^3 }}{\left( \sum\limits_{j=1}^{n} x_j \right) (2 \pi)^{\frac n2} \prod_{j=1}^{n}  x_j^{\frac 32} } \int_{\mathbb{R}^n} \left[\prod_{j=1}^{n} e^{ - \frac{a_j^2}{2 x_j} } da_j \right]  \frac{\exp \left( \frac{\I}{2} \sum_{j < k} 
	a_{\sigma(j)}x_{\sigma(k)}-a_{\sigma(k)}x_{\sigma(j)}
	\right)}{\prod_{j=1}^{n-1} \I \left( \frac{a_{\sigma(j)}}{x_{\sigma(j)}} - \frac{a_{\sigma(j+1)}}{x_{\sigma(j+1)}}\right)  } 
\end{align}
Here the expressions under the sign of the integral have poles along the diagonals defined as $a_{\sigma(j)}/x_{\sigma(j)}- a_{\sigma(j+1)}/x_{\sigma(j+1)} = 0$, $j=1,\dots,n-1$. Recall the integrals should be understood as the Cauchy principal value integrals, that is, we exclude the tubular neighborhood of the divisor of poles of the radius $r$, integrate, and take the $r\to 0$ limit of the resulting expression. Similarly to Okounkov's formula, they can be decomposed into a principal part without poles and a diagonal part by applying the Sokhotski-Plemelj formula.

\begin{lemma} \label{lem:MainLemmaPrincipal}
	The right hand side of Equation~\eqref{eq:buryak-principal} decomposes in a similar way to the right hand side of Equation~\eqref{eq:ok-principal}, that is, into a sum of its principal part and some diagonal regularization terms. The principal parts of the right hand sides of  Equations~\eqref{eq:ok-principal} and~\eqref{eq:buryak-principal} are equal. 
\end{lemma}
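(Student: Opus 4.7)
My plan is to convert both principal parts into explicit Schwinger/Laplace integrals, perform all Gaussian integrations, and match the resulting exponents.

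The first step is to apply the Sokhotski--Plemelj identity to each of the $n-1$ principal-value pole factors in~\eqref{eq:buryak-principal}, decomposing $\mathrm{P.V.}\,\tfrac{1}{iu} = \tfrac{1}{2}\bigl(\tfrac{1}{i(u-i0)} + \tfrac{1}{i(u+i0)}\bigr)$. A sign-flip symmetry $(a_j,\sigma)\mapsto(-a_j,\sigma^{\mathrm{rev}})$ identifies the two $\pm i0$ choices, so that the principal part of Buryak corresponds to the purely one-sided representation and the $\delta$-function residues produced by Sokhotski--Plemelj give the diagonal terms (to be treated in Section~\ref{sec:diagonal}). Then I use $\tfrac{1}{i(u+i0)} = -\int_0^\infty e^{ius}\,ds$ to introduce Schwinger parameters $t_1,\dots,t_{n-1}\in\mathbb{R}_{\geq 0}$, and $\tfrac{1}{\sum x_j} = \int_0^\infty e^{-(\sum x_j)r}\,dr$ to introduce an additional ``closure'' parameter $r\in\mathbb{R}_{\geq 0}$ that will eventually supply the $n$-th Okounkov variable.

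For each $\sigma\in S_n$ the $a_j$-integrand is now purely Gaussian. Denoting $y_j \coloneqq x_{\sigma(j)}$, $b_j \coloneqq a_{\sigma(j)}$, $S^L_j \coloneqq \sum_{k<j} y_k$, $S^R_j \coloneqq \sum_{k>j} y_k$, and $t_0 = t_n \coloneqq 0$, the linear-in-$b_j$ coefficient in the exponent equals $iA_j$ with $A_j = \tfrac{1}{2}(S^R_j - S^L_j) + (t_j - t_{j-1})/y_j$. Completing the square and integrating produces $\prod_j\sqrt{2\pi y_j}$ times $\exp\bigl(-\tfrac{1}{2}\sum_j y_jA_j^2\bigr)$. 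A direct algebraic check gives the identity
\begin{equation*}
\sum_{j=1}^n y_j(S^R_j - S^L_j)^2 = \tfrac{1}{3}\bigl(Y^3 - \textstyle\sum_j y_j^3\bigr), \qquad Y \coloneqq \sum_j y_j,
\end{equation*}
which converts Buryak's prefactor $e^{Y^3/24}$ into the Okounkov-type factor $e^{\sum_j y_j^3/24}$ after combining with the cube-part of $-\tfrac{1}{2}\sum_j y_j A_j^2$. After the rescaling $(x_j, t_j, r)\mapsto(x_j/2^{1/3},\;2^{-1/3}t_j,\;2^{-1/3}r)$ and relabeling $r$ as the $n$-th cyclic variable, the remaining quadratic and linear parts in $(t_1,\dots,t_{n-1}, r)$ should align with Okounkov's cyclic exponent $-\sum_j(s_j - s_{j+1})^2/(4x_j) - \sum_j(s_j + s_{j+1})x_j/2$ (with $s_{n+1}\coloneqq s_1$).

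Once the per-$\sigma$ Buryak integrand is identified with a ``cyclic-opening'' of $\mathcal{E}(x_{\sigma(1)}/2^{1/3},\dots,x_{\sigma(n)}/2^{1/3})$, summing over $\sigma\in S_n$ and grouping the $n$ terms in each cyclic class $[\sigma]\in S_n/\mathbb{Z}_n$ will yield the $\mathcal{E}^{\circlearrowleft}$-sum with the correct coefficient; the signs $(-1)^{n-1}$ from the $n-1$ Schwinger representations match the $(-1)^{n+1}$ in~\eqref{eq:ok-principal}, and the $(2\pi)^{n/2}/\prod\sqrt{x_j}$ prefactors line up by Gaussian bookkeeping. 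The hard part will be proving that the closure parameter $r$ (whose integrand has the linear coefficient $-Y$) correctly reproduces Okounkov's cyclic wrap-around coefficient $-\tfrac{1}{2}(y_n + y_1)$ and the corresponding quadratic contribution only after the $S_n$-symmetrization; this is essentially a ``cycle equals sum over all openings into a chain'' combinatorial identity, analogous in spirit to Lemma~\ref{lem:coeffP}, and is expected to be the main technical content of the matching.
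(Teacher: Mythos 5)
Your Gaussian bookkeeping is correct (the linear coefficient $A_j$, the identity $\sum_j y_j(S^R_j-S^L_j)^2=\tfrac13\bigl(Y^3-\sum_j y_j^3\bigr)$, the conversion of $e^{Y^3/24}$ into $e^{\sum_j y_j^3/24}$, and the chain exponent $-\sum_j(t_j-t_{j-1})^2/(2y_j)-\sum_m t_m(y_m+y_{m+1})/2$ all check out), but the proof is not complete: the step you yourself flag as ``the main technical content'' --- that the closure parameter $r$ coming from $1/\sum_j x_j=\int_0^\infty e^{-Yr}\,dr$, together with the sum over the $n$ openings of each cyclic class, reproduces Okounkov's cyclic integrand --- is precisely the point where the cyclic structure of $\mathcal{E}^{\circlearrowleft}$ and its $n$-th integration variable appear, and it is asserted rather than proven. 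Without it you have only established that each $\sigma$-summand of~\eqref{eq:buryak-principal} is an open-chain integral over $(t_1,\dots,t_{n-1})$ times $1/Y$, which is not yet the principal term of~\eqref{eq:ok-principal}. For the record, the identity you need is true and is not as hard as you anticipate: decompose Okounkov's domain $\mathbb{R}_{\geq 0}^n$ according to which $s_k$ is minimal, set $r=s_k$ and $t_j=s_{k+j}-r$; the quadratic form depends only on differences, and the linear form produces exactly $-rY-\sum_m t_m(y_{k+m-1}+y_{k+m})/2$, so the cyclic integral equals the sum over the $n$ cyclic cuts of your chain integrals times $\int_0^\infty e^{-rY}dr$. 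In particular it needs only the $n$ cyclic shifts inside one class, not an $S_n$-symmetrization, and it is a domain-splitting argument rather than anything analogous to Lemma~\ref{lem:coeffP}.

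By way of comparison, the paper avoids this difficulty by performing the chain-to-cycle step \emph{before} introducing any Schwinger parameters: after the finite imaginary shift of the $a$-variables (your Sokhotski--Plemelj step plays the same role, and indeed the $+\I 0$ prescription is the one forced by convergence), the shifted denominators satisfy the telescoping identity $\sum_{j=1}^n\bigl[\I(b_j/x_j-b_{j+1}/x_{j+1})-(x_j+x_{j+1})/2\bigr]=-\sum_j x_j$, so summing the $n$ chain terms of a cyclic class against the prefactor $1/\sum_j x_j$ immediately yields minus the cyclic product, as in~\eqref{eq:BuryakPrincSymmetric}; only then are the $n$ cyclic factors Laplace-represented and the Gaussian integrals done, giving~\eqref{eq:OkounBuryakPrincipal}--\eqref{eq:laststep}. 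Your reordering (Schwinger first, Gaussian second, cycle closure last) is viable, but the closure must then be proven at the level of the $(t,r)$-integrals as above; until you do so the argument has a genuine gap at its central step. A minor further remark: the sign-flip symmetry $(a_j,\sigma)\mapsto(-a_j,\sigma^{\mathrm{rev}})$ is not needed --- writing each principal-value factor as the $+\I 0$ factor plus an $\I\pi\delta$ term already splits~\eqref{eq:buryak-principal} into a pole-free part and manifestly diagonal-supported terms, which is all the first assertion of the lemma requires.
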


\begin{proof} Fix $\sigma\in S_n$ and consider the corresponding summand on the right hand of Equation~\eqref{eq:buryak-principal}. We apply the following change of the variables $a_1,\dots,a_n$:
\[
a_{\sigma(j)}  =  b_{\sigma(j)}+\frac \I 2 x_{\sigma(j)} \left(-\sum_{\ell<j} x_{\sigma(\ell)} + \sum_{r>j} x_{\sigma(r)}\right)
\]
With this change of variables we have:
\[
\frac 18\sum_{j\not=k} x_jx_k^2 + \frac{1}{4} \sum_{j<k<t} x_jx_kx_t +\frac \I 2\sum_{k<t} \left(a_{\sigma(k)}x_{\sigma(t)} -a_{\sigma(t)}x_{\sigma(k)}\right) - \sum_{j=1}^n \frac{a_j^2}{2x_j}
=-\sum_{j=1}^n \frac{b_j^2}{2x_j},
\]
and 
\[
\I \left(\frac{a_{\sigma(j)}}{x_{\sigma(j)}} -\frac{a_{\sigma(j+1)}}{x_{\sigma(j+1)}}\right) = \I \left(\frac{b_{\sigma(j)}}{x_{\sigma(j)}} -\frac{b_{\sigma(j+1)}}{x_{\sigma(j+1)}}\right) - \frac{\left(x_{\sigma(j)}+x_{\sigma(j+1)}\right)}{2}.
\]
Thus, the right hand side of Equation~\eqref{eq:buryak-principal} is equal to
\begin{align} \label{eq:BuryakPrincSymmetric}
\sum_{\sigma \in S_n}
\frac{e^{ \frac{1}{24}\sum\limits_{j=1}^{n} x_j^3 }}{\left( \sum\limits_{j=1}^{n} x_j \right) (2 \pi)^{\frac n2} \prod_{j=1}^{n}  x_j^{\frac 32} } \int_{\mathbb{R}^n}  \frac{\prod_{j=1}^{n} e^{ - \frac{b_j^2}{2 x_j} } db_j  }{\prod_{j=1}^{n-1} \left[\I \left(\frac{b_{\sigma(j)}}{x_{\sigma(j)}} -\frac{b_{\sigma(j+1)}}{x_{\sigma(j+1)}}\right) - \frac{\left(x_{\sigma(j)}+x_{\sigma(j+1)}\right)}{2} \right] } & + \mathrm{diagonal\ terms} \\ \notag
= - \sum_{\sigma \in S_n/\mathbb{Z}_n}
\frac{e^{ \frac{1}{24}\sum\limits_{j=1}^{n} x_j^3 }}{(2 \pi)^{\frac n2} \prod_{j=1}^{n}  x_j^{\frac 32} } \int_{\mathbb{R}^n}  \frac{\prod_{j=1}^{n} e^{ - \frac{b_j^2}{2 x_j} } db_j  }{\prod_{j=1}^{n} \left[\I \left(\frac{b_{\sigma(j)}}{x_{\sigma(j)}} -\frac{b_{\sigma(j+1)}}{x_{\sigma(j+1)}}\right) - \frac{\left(x_{\sigma(j)}+x_{\sigma(j+1)}\right)}{2} \right] } & + \mathrm{diagonal\ terms},
\end{align}
where in the second line $\sigma(n+1)$ denotes $\sigma(1)$. The diagonal terms are half-residues arising as a result of translating the contour of the $b_{\sigma(k)}$'s back to $\mathbb{R}^n$, removing the diagonal singularities in the process. An explicit expression for the diagonal terms will be computed in the next section using the Sokhotski-Plemelj formula.

\begin{remark}
Let us note that Equation (\ref{eq:BuryakPrincSymmetric}) is similar to the expressions for the $n$-point functions obtained by Br\'{e}zin and Hikami in \cite{BH0704,BH0709}.
\end{remark}

Since we got a sum over $\sigma\in S_n/\mathbb{Z}_n$, as in the principal part of the right hand side of Equation~\eqref{eq:ok-principal}, it is sufficient to prove for each $\sigma\in S_n/\mathbb{Z}_n$ that the corresponding summands are equal. Without loss of generality we can assume that $\sigma = [\mathrm{id}]$. Then we have to prove that
\begin{align} \label{eq:OkounBuryakPrincipal}
& -\frac{e^{ \frac{1}{24}\sum\limits_{j=1}^{n} x_j^3 }}{(2 \pi)^{\frac n2} \prod_{j=1}^{n}  x_j^{\frac 32} } \int_{\mathbb{R}^n}  \frac{\prod_{j=1}^{n} e^{ - \frac{b_j^2}{2 x_j} } db_j  }{\prod_{j=1}^{n} \left[\I \left(\frac{b_{j}}{x_{j}} -\frac{b_{j+1}}{x_{j+1}}\right) - \frac{\left(x_{j}+x_{j+1}\right)}{2} \right] } \\ \notag
& = \frac{(-1)^{n+1}(2\pi)^{n/2}}{\prod_{j=1}^n\sqrt{x_j}}\frac{e^{ \frac{1}{12}\sum\limits_{j=1}^{n} \left(\frac{x_j}{2^{1/3}}\right)^3 } }{\prod_{j=1}^{n} \sqrt{4\pi \left(\frac{x_j}{2^{1/3}}\right)} } \int_{\mathbb{R}_{\geq 0}^n} \left[\prod_{j=1}^{n} ds_j \right]
\exp \left({-\sum_{j=1}^n \left(\frac{(s_j-s_{j+1})^2}{4\left(\frac{x_j}{2^{1/3}}\right)} + \frac{(s_j+s_{j+1})x_j}{2^{4/3}} \right)}\right),
\end{align}
or, equivalently, if we cancel the common factors and rescale $s_j$ by $2^{-1/3}$, we have to prove that
\begin{align} \label{eq:laststep}
& \frac{1}{\prod\limits_{j=1}^{n} (2\pi x_j)^{\frac 12} } \int_{\mathbb{R}^n}  \frac{\prod\limits_{j=1}^{n} e^{ - \frac{b_j^2}{2 x_j} } db_j  }{\prod\limits_{j=1}^{n} \left[-\I \left(\frac{b_{j}}{x_{j}} +\frac{b_{j+1}}{x_{j+1}}\right) + \frac{\left(x_{j}+x_{j+1}\right)}{2} \right] } \\ \notag
& = \int_{\mathbb{R}_{\geq 0}^n} \left[\prod_{j=1}^{n} ds_j \right]
\exp \left(
-\sum_{j=1}^n 
\left(
	\frac{(s_j-s_{j+1})^2}{2x_j} + \frac{(s_j+s_{j+1})x_j}{2} 
\right)
\right).
\end{align}
To this end, we use the following trick. Replace $\left[-\I \left(\frac{b_{j}}{x_{j}} +\frac{b_{j+1}}{x_{j+1}}\right) + \frac{\left(x_{j}+x_{j+1}\right)}{2} \right]^{-1}$ by 
\[
\int_{\mathbb{R}_{\geq 0}} ds_{j+1} \exp\left( s_{j+1} \left[\I \left(\frac{b_{j}}{x_{j}} +\frac{b_{j+1}}{x_{j+1}}\right) - \frac{\left(x_{j}+x_{j+1}\right)}{2} \right] \right),
\]
where $s_{n+1}$ denotes $s_1$, change the order of integration and take the Gaussian average with respect to the variables $b_j$. We see that the left hand side of the equation~\eqref{eq:laststep} is equal to
\begin{align} 
& \int_{\mathbb{R}_{\geq 0}^n} \left[\prod_{j=1}^{n} ds_j \right] \int_{\mathbb{R}^n} \frac{\prod\limits_{j=1}^{n}db_j}{\prod\limits_{j=1}^{n} (2\pi x_j)^{\frac 12} } 
\prod\limits_{j=1}^{n} \exp \left(
-\frac{1}{2x_j}\left(b_j^2 -\I 2 a_j s_{j+1} +\I 2a_j s_j\right) - \frac{(x_j+x_{j+1})s_{j+1}}{2} 
\right)
\\ \notag
& = \int_{\mathbb{R}_{\geq 0}^n} \left[\prod_{j=1}^{n} ds_j \right]
\exp \left(
\sum_{j=1}^n 
\left(
\frac{(\I s_j-\I s_{j+1})^2}{2x_j} - \frac{(s_j+s_{j+1})x_j}{2} 
\right)
\right),
\end{align}
which is precisely the right hand side of Equation~\eqref{eq:laststep}. This computation proves Equation~\eqref{eq:laststep}, and, therefore, completes the proof of the lemma.
\end{proof}

\begin{remark}\label{rem:OkounkovArgument} The argument of Okounkov in~\cite[Section 3.1]{OkounkovMain} implies that it is sufficient to compare the principal terms of $\mathcal{F}_n^{\mathsf{Bur}}$ and  $\mathcal{F}_n^{\mathsf{Ok}}$ in order to prove the coincidence of these formula, since the diagonal terms only compensate for the non-regular terms in the principal part detected by the wrong powers of $\pi$ (it is also the case for $\mathcal{F}_n^{\mathsf{Bur}}$, where this property is evident from the Sokhotski–Plemelj formula). So, Lemma~\ref{lem:MainLemmaPrincipal} implies Theorem~\ref{thm:main}. However, we can explicitly identify the diagonal terms in $\mathcal{F}_n^{\mathsf{Bur}}$ and  $\mathcal{F}_n^{\mathsf{Ok}}$, and we do this in the next section.
\end{remark}

\section{Diagonal contributions} \label{sec:diagonal}

We represent Buryak's formula in the following way. 

\begin{theorem}\label{thm:fullcorrespondence} (1) We have:
	\begin{align}\label{eq:BuryakCyclicDiagonal}
	\mathcal{F}_n^{\mathsf{Bur}}= \frac{(2\pi)^{\frac n2}}{\prod\limits_{j=1}^{n}  x_j^{\frac 12}}& \sum_{\ell=1}^{n} \sum_{\substack{[I_1\sqcup\cdots\sqcup I_\ell ]\\ = \{1,\dots,n\}}} \frac{-e^{ \frac{1}{24}\sum\limits_{j=1}^{\ell} x_{I_j}^3  }}{ (2 \pi)^{\ell}  }\int_{\mathbb{R}^\ell}    \frac{\prod_{j=1}^{\ell} e^{ - \frac{f_j^2}{2 x_{I_j}} } \frac{df_j}{x_{I_j}} }{\prod_{j=1}^{\ell} \left[\I \left(\frac{f_{j}}{x_{I_j}} - \frac{f_{j+1}}{x_{I_{j+1}}}\right) - \frac{x_{I_j}+x_{I_{j+1}}}{2} \right]  } . 
	\end{align}
Here we take the sum over the cyclicly ordered partitions of $\{1,\dots,n\}$, that is, $[I_1\sqcup \cdots \sqcup I_\ell]$ is identified with $[I_2\sqcup \cdots \sqcup I_\ell\sqcup I_1]$, and $I_{\ell+1}$ denotes $I_1$ and $f_{\ell+1}$ denotes $f_1$. 

(2) For every cyclicly ordered partitions of $\{1,\dots,n\}$, $[I_1\sqcup \cdots \sqcup I_\ell]$, we have:
\begin{align}\label{eq:BuryakOkounkovFullcorrespondence}
\frac{e^{ \frac{1}{24}\sum\limits_{j=1}^{\ell} x_{I_j}^3}}{ (2 \pi)^{\ell}  }
\int_{\mathbb{R}^\ell}    \frac{\prod_{j=1}^{\ell} e^{ - \frac{f_j^2}{2 x_{I_j}} } \frac{df_j}{x_{I_j}} }{\prod_{j=1}^{\ell} \left[\I \left(\frac{f_{j}}{x_{I_j}} - \frac{f_{j+1}}{x_{I_{j+1}}}\right) - \frac{x_{I_j}+x_{I_{j+1}}}{2} \right] } =(-1)^\ell\mathcal{E}\left(\frac{x_{I_1}}{2^{1/3}},\dots, \frac{x_{I_\ell}}{2^{1/3}}\right).
\end{align}
\end{theorem}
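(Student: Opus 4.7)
The plan is to prove Theorem~\ref{thm:fullcorrespondence} in two separate steps, matching its two parts. Part~(2) is essentially the principal-part calculation of Lemma~\ref{lem:MainLemmaPrincipal} applied to the block structure: for a fixed cyclicly ordered partition $[I_1 \sqcup \cdots \sqcup I_\ell]$, the integral on the left-hand side of~\eqref{eq:BuryakOkounkovFullcorrespondence} has the same structural form as~\eqref{eq:laststep}, with $\ell$ effective variables $f_j$ in place of $b_j$, effective masses $x_{I_j}$ in place of $x_j$, and effective cubic $\sum_j x_{I_j}^3$ in place of $\sum_j x_j^3$. Applying verbatim the same manipulation --- rewriting each denominator $1/[\I(\cdot) - (x_{I_j}+x_{I_{j+1}})/2]$ as $-\int_0^{\infty}\! ds_{j+1}\exp\bigl(s_{j+1}[\I(\cdot) - (x_{I_j}+x_{I_{j+1}})/2]\bigr)$, interchanging the order of integration, and performing the Gaussian integrals in $f_j$ --- reproduces exactly Okounkov's $\mathcal{E}$ after the $2^{1/3}$ rescaling, with the sign $(-1)^\ell$ coming from the $\ell$ minus signs picked up in the $s$-integral conversion.

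For Part~(1), the starting point is equation~\eqref{eq:BuryakPrincSymmetric}, which displays $\mathcal{F}_n^{\mathsf{Bur}}$ as its principal part on the real $b$-contour plus as yet unspecified diagonal corrections. These corrections arise from returning from the real $b$-contour to the original principal-value contour in $a$, which crosses the poles of each of the $n$ cyclic denominator factors. Applying the Sokhotski--Plemelj identity iteratively to these factors decomposes the formula into a sum over subsets $S \subseteq \{1,\ldots,n\}$: factors with index in $S$ are replaced by $\pi\I\,\delta$ enforcing $b_{\sigma(j)}/x_{\sigma(j)} - b_{\sigma(j+1)}/x_{\sigma(j+1)} = -\I(x_{\sigma(j)}+x_{\sigma(j+1)})/2$, while factors outside $S$ remain as regular integrals. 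The $\delta$-constraints fragment the cyclic sequence $\sigma(1) \to \cdots \to \sigma(n) \to \sigma(1)$ into $\ell = n-|S|$ arcs, producing a cyclicly ordered partition $[I_1 \sqcup \cdots \sqcup I_\ell]$. Inside each arc, the $|I_j|-1$ constraints reduce the Gaussian integral in $|I_j|$ variables to a single integral in an effective variable $f_j$ of variance $x_{I_j}$; the imaginary offsets built into the constraints combine with the Gaussian quadratic form, with the completing-the-square contributions from the $f_j$-integration, and with the original cubic exponent $e^{(\sum_j x_j^3)/24}$ to reorganize into $e^{(\sum_j x_{I_j}^3)/24}$, while the surviving denominators become $\I(f_j/x_{I_j}-f_{j+1}/x_{I_{j+1}})-(x_{I_j}+x_{I_{j+1}})/2$. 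Combining the sum over $\sigma \in S_n/\mathbb{Z}_n$ with the sum over $S$ then produces the sum over all cyclicly ordered partitions of $\{1,\ldots,n\}$ in~\eqref{eq:BuryakCyclicDiagonal}.

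The main obstacle is the algebraic bookkeeping in Part~(1): verifying that the imaginary shifts from the $\delta$-constraints interact with the quadratic Gaussian and the cubic exponential precisely so as to produce the clean substitutions $x_{\sigma(\cdot)} \to x_{I_j}$ in both the cubic exponent $\sum_j x_j^3 \to \sum_j x_{I_j}^3$ and in the pair-sums $x_{\sigma(\cdot)} + x_{\sigma(\cdot+1)} \to x_{I_j}+x_{I_{j+1}}$ that appear in the remaining denominators. Concretely, within a single block one must check that the cubic identity obtained by adding the shift corrections and the $f_j$-integration square-completion term reduces to $x_{I_j}^3$ exactly; in parallel one must track the powers of $2\pi$, the Jacobians of $\delta$-function integration, and a consistent sign convention for Sokhotski--Plemelj inherited from the orientation of the contour shift in~\eqref{eq:BuryakPrincSymmetric}. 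Modulo this finite but delicate bookkeeping, Part~(1) is a direct application of standard residue techniques, so the principal technical step is concentrated there.
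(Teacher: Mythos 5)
Your part (2) is unproblematic and is exactly what the paper does: one repeats the computation of Lemma~\ref{lem:MainLemmaPrincipal} (the $s$-integral representation of the denominators, interchange of integrals, Gaussian average) with $(b_j,x_j,n)$ replaced by $(f_j,x_{I_j},\ell)$, and the $(-1)^\ell$ is indeed the sign from converting the $\ell$ denominators, whose real parts are negative, into integrals over $s\geq 0$. The genuine gap is in part (1), and it sits precisely where you defer to ``delicate bookkeeping''. You propose to apply the Sokhotski--Plemelj rule factor by factor, replacing each chosen denominator by $\pi\I\,\delta$. That half-residue rule is only correct where exactly one diagonal is met. The principal value in~\eqref{eq:buryak-principal} is defined by removing a tubular neighborhood of the \emph{whole} divisor of poles, and near a deep stratum where the $|I|-1$ consecutive denominators of a block $I$ vanish simultaneously the contribution is not a product of independent half-residues: for a fixed $\sigma$ the relevant piece of contour is the intersection of the torus around the deep diagonal with a Weyl chamber, contributing only $1/|I|!$ of the full residue, and the clean factor $(2\pi\I)^{|I|-1}$ per block is recovered only after summing over all $|I|!$ permutations $\rho$ with $\rho(I)=I$, whose integrals coincide. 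Under your rule each such ordering would contribute $(\pi\I)^{|I|-1}=2^{-(|I|-1)}(2\pi\I)^{|I|-1}$, so after you combine the sum over $\sigma$ with the sum over subsets the block coefficient would come out as $|I|!\,2^{-(|I|-1)}(2\pi\I)^{|I|-1}$, wrong for every $|I|\geq 3$. This corner analysis, together with the observation that the residue integrals attached to a partition are themselves singular and their own diagonal parts must be re-attributed to coarser partitions (which is why only the principal part of each block integral is kept, turning $\frac{1}{24}(\sum_j x_j)^3$ into $\frac{1}{24}\sum_j x_{I_j}^3$), is the actual content of the paper's proof of part (1), not routine normalization.

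A secondary point: you extract the diagonal terms from the cyclic form in~\eqref{eq:BuryakPrincSymmetric}, but the $n$ cyclic denominators there are non-singular on the real $b$-contour, and the passage from the linear chain of $n-1$ factors with the $\left(\sum_j x_j\right)^{-1}$ prefactor to the cyclic form is an identity for the shifted, regular integrands, i.e.\ it is established for the principal part only. The residue contributions have to be extracted from the original principal-value integral in the $a$-variables with its $n-1$ chain factors, giving first a sum over linearly ordered partitions as in~\eqref{eq:BuryakPrincipalValue}, and only afterwards repackaged into the cyclic sum of~\eqref{eq:BuryakCyclicDiagonal}; the cyclic structure is an output of that final algebraic step, not a legitimate starting point for the Sokhotski--Plemelj decomposition.
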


This theorem is a refinement of Lemma~\ref{lem:MainLemmaPrincipal} that includes now all the diagonal terms and we have an explicit term-wise identification. It immediately implies Theorem~\ref{thm:main}. We devote the rest of this section to the proof of Theorem~\ref{thm:fullcorrespondence}, whose main part consists of a careful application of the Sokhotski-Plemelj formula, and the further steps just repeat the computations in the proof of Lemma~\ref{lem:MainLemmaPrincipal}.

\subsection{Structure of the Sokhotski–Plemelj formula}
Let us discuss explicitly how to apply the Sokhotski–Plemelj formula to Equation~\eqref{eq:buryak-symmetric}. In principle, one can just directly iteratively apply it, but we first discuss the structure of the formula since it simplifies computation a lot.

Fix a particular $\sigma\in S_n$ and consider the corresponding term in the variables
\[
f = \frac {\sum_{i=1}^n x_i}n \left(\sum_{i=1}^n\frac{a_i}{x_i} \right), \qquad g_{i} = \frac{a_{\sigma(i)}}{x_{\sigma(i)}} - \frac{a_{\sigma(i+1)}}{x_{\sigma(i+1)}}, \quad i=1,\dots,n-1. 
\]
In these variables the shift of $a_i$'s that we applied in the previous section looks like
\[
f \to f-\frac{\I \sum_{i=1}^n x_i}{2n}\sum_{i=1}^n (2i-1-n) x_{\sigma(i)}, \qquad g_{i} \to g_i - \frac{\I}{2} (x_{\sigma(i)}+x_{\sigma(i+1)}), \quad i=1,\dots,n-1. 
\]
The denominator of the expression under the integral is equal to $g_1\cdots g_{n-1}$. Since there is no pole in $f$, its shift is neglectable. Assuming $x_1,\dots,x_n$ to be small positive real numbers, we move the contour of integration for each $g_i$ to the lower half-plane, and then can deform it back to the real line with excluded interval around $g_i=0$ and a half-circle around it in the lower half-plane, which in the limit gives the sum of the principal part and the half-residue at $g_i=0$. This is exactly the Sokhotski–Plemelj formula applied now to the product of simple poles $g_1\cdots g_{n-1}$. 

The whole integral expression is then split into $2^{n-1}$ summands for each $\sigma$, since we have to make a choice for each $g_i$ whether we take the principal part or the residue part of its contour.  If we choose for all $g_i$'s the principal part of the integral, we exactly obtain the principal terms considered in the previous section. More generally, the full system of choices is controlled by pairs $(\sigma, \sqcup_{i=1}^\ell I_i)$, where $\sigma\in S_n$ and $\sqcup_{i=1}^\ell I_i=\{1,\dots,n\}$ and $I_1<\cdots <I_\ell$ in the sense that for any $n_j\in I_{i_j}$, $j=1,2$, $i_1<i_2$ implies $n_1<n_2$. Once we fixed a pair $(\sigma, \sqcup_{i=1}^\ell I_i)$, we choose the residue option for all $g_{\sigma(i)}$'s with $i \in I_j \setminus \{ \max(I_j)\}$, $j=1,\dots,\ell$, and the principal option for all $g_{\sigma(i)}$'s with $i=\max(I_j)$, $j=1,\dots,\ell-1$. 

Note that the integrals for the pairs $(\sigma\rho, \sqcup_{i=1}^\ell I_i)$, $\rho(I_i)=I_i$ for $i=1,\dots,\ell$, coincide. Moreover, each of them contributes $\prod_{i=1}^\ell 1/|I_i|!$ to the product of negative residues since the contour of integration in the plane $\sum_{j\in I_i} a_{\sigma(j)}/x_{\sigma(j)}=0$, $i=1,\dots,\ell$, is the intersection of the torus around the origin with the Weyl chamber selected by the inequalities $a_{\sigma\rho(j_1)}/x_{\sigma\rho(j_1)} <a_{\sigma\rho(j_2)}/x_{\sigma\rho(j_2)} $ for $j_1,j_2\in I_i$, $j_1<j_2$. Thus the residue part of the integral in the sum over all $\rho\in S_n$ such that $\rho(I_i)=I_i$ for $i=1,\dots,\ell$ is the product of the full residues around zero in the planes $\sum_{j\in I_i} a_{\sigma(j)}/x_{\sigma(j)}=0$, $i=1,\dots,\ell$, with the coefficient $\prod_{i=1}^\ell (2\pi\I)^{|I_i|-1}$.

Now we are ready to perform the computation. For simplicity we take $\sigma=\id$, $\ell=1$, and $I_1=\{1,\dots,n\}$, and treat the general case as an $\ell$-fold iteration of the same computation, with the indices adjusted with respect to a general $\sigma$. 

\subsubsection{Computation for $(\id,\{1,\dots,n\})$} In the case $\sigma=\id$ and $\ell=1$, $I_1=\{1,\dots,n\}$, we take the sum over all $\rho\in S_n$. The corresponding residue term  is equal to
\[
\frac{e^{ \frac{1}{24} \left( \sum\limits_{j=1}^{n} x_j \right)^3 }}{\left( \sum\limits_{j=1}^{n} x_j \right) (2 \pi)^{\frac n2} \prod_{j=1}^{n}  x_j^{\frac 12} }\int_{\mathbb{R}} \oint_{(S^1)^{n-1}} \left[\prod_{j=1}^{n} e^{ - \frac{a_j^2}{2 x_j} } \frac{da_j}{x_j} \right]  \frac{\exp \left( \frac{\I}{2} \sum_{j < k} 
	a_{j}x_{k}-a_{k}x_{j}
	\right)}{\prod_{j=1}^{n-1} \I \left( \frac{a_{j}}{x_{j}} - \frac{a_{j+1}}{x_{j+1}}\right)  } . 
\]
Note that $- \sum_{j=1}^n \frac{a_j^2}{2 x_j} = -f^2/2\left(\sum_{j=1}^n x_j\right) + O(g_1,\dots,g_{n-1})$, $\prod_{j=1}^n da_j/x_j = \prod_{j=1}^{n-1} dg_j df/\left(\sum_{j=1}^n x_j\right)$, and $\sum_{j < k} 
a_{j}x_{k}-a_{k}x_{j} = O(g_1,\dots,g_{n-1})$. This allows us to rewrite the residue as 
\[
(2\pi)^{n-1} \frac{e^{ \frac{1}{24}\left( \sum\limits_{j=1}^{n} x_j \right)^3 }}{\left( \sum\limits_{j=1}^{n} x_j \right) (2 \pi)^{\frac n2} \prod_{j=1}^{n}  x_j^{\frac 12} } \int_{\mathbb{R}}  e^{ - f^2/\left(2 \sum_{j=1}^n x_j \right) } \frac{df}{\sum\limits_{j=1}^n x_j }.
\]

\subsubsection{Computation for $\sigma=\id$, general partition} Recall that we denote by $x_I$, $I\subset \{1,\dots,n\}$ the sum $\sum_{i\in I} x_I$. In the case of a general partition $\sqcup_{i=1}^\ell I_i$, it is more convenient to work in the coordinates 
\[
f_i = \frac {x_{I_i}}{|I_i|} \left(\sum_{j\in I_i}\frac{a_i}{x_i} \right), \quad g_{ij} = \frac{a_{j}}{x_{j}} - \frac{a_{j+1}}{x_{j+1}}, \quad i=1,\dots,\ell, \quad j\in I_i\setminus \{\max(I_i)\}. 
\]
The corresponding residue term is equal to the principal part of 
\[
\frac{e^{ \frac{1}{24}\left( \sum\limits_{j=1}^{n} x_j \right)^3 }}{\left( \sum\limits_{j=1}^{n} x_j \right) (2 \pi)^{\frac n2} \prod_{j=1}^{n}  x_j^{\frac 12} }\int_{\mathbb{R}^\ell} \oint_{(S^1)^{n-\ell}} \left[\prod_{j=1}^{n} e^{ - \frac{a_j^2}{2 x_j} } \frac{da_j}{x_j} \right]  \frac{\exp \left( \frac{\I}{2} \sum_{j < k} 
	a_{j}x_{k}-a_{k}x_{j}
	\right)}{\prod_{j=1}^{n-1} \I \left( \frac{a_{j}}{x_{j}} - \frac{a_{j+1}}{x_{j+1}}\right)  } , 
\]
where the integral over $\mathbb{R}^\ell$ is the Cauchy principle value integral (except for the diagonal direction, where it is a converging integral).
In the new coordinates, we have: 
\begin{align*}
 & - \sum_{j=1}^n \frac{a_j^2}{2 x_j}  = - \sum_{i=1}^\ell \frac{f_i^2}{2 x_{I_i}} + O(g_{ij}) ; \qquad
   \prod_{j=1}^{n} \frac{da_j}{x_j} = \prod_{i=1}^{\ell} \prod_{j=1}^{|I_i|-1} dg_{ij} \prod_{i=1}^\ell \frac{df_i}{x_{I_i}} ; \\
& \prod_{i=1}^{\ell-1} \I \left( \frac{a_{\max(I_i)}}{x_{\max(I_i)}} - \frac{a_{\max(I_i)+1}}{x_{\max(I_i)+1}}\right)  = \prod_{i=1}^{\ell-1} \I \left( \frac{f_i}{x_{I_i}} - \frac{f_{i+1}}{x_{I_{i+1}}}\right) + O(g_{ij}); \\
& \sum_{j < k} 
a_{j}x_{k}-a_{k}x_{j} = \sum_{j < k} 
f_{j}x_{I_k}-f_{k}x_{I_j}+ O(g_{ij}) .
\end{align*}
This allows us to rewrite the residue formula as 
\begin{equation}\label{eq:diagonalsI}
\prod_{i=1}^\ell (2\pi)^{|I_i|-1} \frac{e^{ \frac{1}{24}\left( \sum\limits_{j=1}^{n} x_j \right)^3 }}{\left( \sum\limits_{j=1}^{n} x_j \right) (2 \pi)^{\frac n2} \prod_{j=1}^{n}  x_j^{\frac 12} }\int_{\mathbb{R}^\ell}  \left[\prod_{j=1}^{\ell} e^{ - \frac{f_j^2}{2 x_{I_j}} } \frac{df_j}{x_{I_j}} \right]  \frac{\exp \left( \frac{\I}{2} \sum_{j < k} 
	f_{j}x_{I_k}-f_{k}x_{I_j}
	\right)}{\prod_{j=1}^{\ell-1} \I \left( \frac{f_{j}}{x_{I_j}} - \frac{f_{j+1}}{x_{I_{j+1}}}\right)  } . 
\end{equation}
The diagonal terms of this expression will be transferred to the partitions of $\{1,\dots,n\}$ with $\ell' < \ell $ terms, so we have to take the principal part:
\begin{equation}\label{eq:diagonalsIprincipal}
\prod_{i=1}^\ell (2\pi)^{|I_i|-1} \frac{e^{ \frac{1}{24} \sum\limits_{j=1}^{\ell} x_{I_j} ^3 }}{\left( \sum\limits_{j=1}^{n} x_j \right) (2 \pi)^{\frac n2} \prod\limits_{j=1}^{n}  x_j^{\frac 12} }\int_{\mathbb{R}^\ell}  \left[\prod_{j=1}^{\ell} e^{ - \frac{f_j^2}{2 x_{I_j}} } \frac{df_j}{x_{I_j}} \right]  \frac{1}{\prod\limits_{j=1}^{\ell-1} \left[ \I \left( \frac{f_{j}}{x_{I_j}} - \frac{f_{j+1}}{x_{I_{j+1}}}\right) - \frac{x_{I_j} + x_{I_{j+1}}}{2} \right] } . 
\end{equation}

\subsubsection{General $\sigma$, general partition} If we have a general $\sigma$, it just means that we no longer have to assume that the subsets $I_1,\dots,I_\ell$ satisfy the property that for any $n_j\in I_{i_j}$, $j=1,2$, $i_1<i_2$ implies $n_1<n_2$. That is, we obtain the same formula as Equation~\eqref{eq:diagonalsIprincipal}, with arbitrary ordered sequence $I_1,\dots,I_\ell$ such that $\sqcup_{i=1}^\ell I_i = \{1,\dots,n\}$. 
We have:
\begin{align}\label{eq:BuryakPrincipalValue}
\mathcal{F}_n^{\mathsf{Bur}}=\sum_{\ell=1}^{n} \sum_{\substack{I_1\sqcup\cdots\sqcup I_\ell \\ = \{1,\dots,n\}}} \frac{(2\pi)^{\frac n2}}{\prod\limits_{j=1}^{n}  x_j^{\frac 12}}& \frac{e^{ \frac{1}{24} \sum\limits_{j=1}^{\ell} x_{I_j}^3 }}{\left( \sum\limits_{j=1}^{n} x_j \right) (2 \pi)^{\ell}  }\int_{\mathbb{R}^\ell}  \left[\prod_{j=1}^{\ell} e^{ - \frac{f_j^2}{2 x_{I_j}} } \frac{df_j}{x_{I_j}} \right]  \frac{1}{\prod\limits_{j=1}^{\ell-1} \left[ \I \left( \frac{f_{j}}{x_{I_j}} - \frac{f_{j+1}}{x_{I_{j+1}}}\right) - \frac{x_{I_j} + x_{I_{j+1}}}{2} \right] } . 
\end{align}
The theorem follows by direct comparison of this expression with the principal part of $\mathcal{F}^{\textrm{Bur}}_l (x_{I_1}, \dots, x_{I_l})$ which, by section 4, is related to $\mathcal{E}\left( \frac{x_{I_1}}{2^{1/3}}, \dots, \frac{x_{I_\ell}}{2^{1/3}}  \right)$.

\subsubsection{Final remarks} We relate Equation~\eqref{eq:BuryakCyclicDiagonal} and~\eqref{eq:BuryakPrincipalValue} exactly in the same way as the two sides of Equation~\eqref{eq:BuryakPrincSymmetric}, see the first half of the proof of Lemma~\ref{lem:MainLemmaPrincipal}. The proof of Equation~\eqref{eq:BuryakOkounkovFullcorrespondence} repeats literally the proof of Equation~\eqref{eq:OkounBuryakPrincipal}, see the second half of the proof of Lemma~\ref{lem:MainLemmaPrincipal}, one just have to replace the symbols $n$, $(b_1,\dots,b_n)$, and $(x_1,\dots,x_n)$ in that argument by $\ell$, $(f_1,\dots,f_\ell)$, and $(x_{I_1},\dots,x_{I_\ell})$.

\bibliographystyle{alpha}
\bibliography{buryakokounkovref}

\begin{thebibliography}{AMMP09}

\bibitem[AMMP09]{AMMP}
A.~Alexandrov, A.~Mironov, A.~Morozov, and P.~Putrov.
\newblock Partition functions of matrix models as the first special functions
  of string theory. {II}. {K}ontsevich model.
\newblock {\em Internat. J. Modern Phys. A}, 24(27):4939--4998, 2009.

\bibitem[BDY16]{bertoladubyang}
Marco Bertola, Boris Dubrovin, and Di~Yang.
\newblock Correlation functions of the {K}d{V} hierarchy and applications to
  intersection numbers over {$\overline{\mathcal{M}}_{g,n}$}.
\newblock {\em Phys. D}, 327:30--57, 2016.

\bibitem[BH07a]{BH0704}
E.~Br\'{e}zin and S.~Hikami.
\newblock Vertices from replica in a random matrix theory.
\newblock {\em J. Phys. A}, 40(45):13545--13566, 2007.

\bibitem[BH07b]{BH0709}
Edouard Br\'{e}zin and Shinobu Hikami.
\newblock Intersection numbers of {R}iemann surfaces from {G}aussian matrix
  models.
\newblock {\em J. High Energy Phys.}, (10):096, 15, 2007.

\bibitem[BS11]{Bursha}
A.~Buryak and S.~Shadrin.
\newblock A new proof of {F}aber's intersection number conjecture.
\newblock {\em Adv. Math.}, 228(1):22--42, 2011.

\bibitem[BSSZ15]{BSSZ}
A.~Buryak, S.~Shadrin, L.~Spitz, and D.~Zvonkine.
\newblock Integrals of {$\psi$}-classes over double ramification cycles.
\newblock {\em Amer. J. Math.}, 137(3):699--737, 2015.

\bibitem[Bur17]{BuryakMain}
Alexandr Buryak.
\newblock Double ramification cycles and the {$n$}-point function for the
  moduli space of curves.
\newblock {\em Mosc. Math. J.}, 17(1):1--13, 2017.

\bibitem[CLL08]{ChenLiLiu}
Lin Chen, Yi~Li, and Kefeng Liu.
\newblock Localization, {H}urwitz numbers and the {W}itten conjecture.
\newblock {\em Asian J. Math.}, 12(4):511--518, 2008.

\bibitem[Cos06]{costello}
Kevin Costello.
\newblock Higher genus {G}romov-{W}itten invariants as genus zero invariants of
  symmetric products.
\newblock {\em Ann. of Math. (2)}, 164(2):561--601, 2006.

\bibitem[EO07]{EOr}
B.~Eynard and N.~Orantin.
\newblock Invariants of algebraic curves and topological expansion.
\newblock {\em Commun. Number Theory Phys.}, 1(2):347--452, 2007.

\bibitem[Ion02]{Ionel}
Eleny-Nicoleta Ionel.
\newblock Topological recursive relations in {$H^{2g}(\mathcal{M}_{g,n})$}.
\newblock {\em Invent. Math.}, 148(3):627--658, 2002.

\bibitem[Kaz09]{Kazarian}
M.~Kazarian.
\newblock K{P} hierarchy for {H}odge integrals.
\newblock {\em Adv. Math.}, 221(1):1--21, 2009.

\bibitem[KL07]{kazarianlando}
M.~E. Kazarian and S.~K. Lando.
\newblock An algebro-geometric proof of {W}itten's conjecture.
\newblock {\em J. Amer. Math. Soc.}, 20(4):1079--1089, 2007.

\bibitem[KL09]{KimLiu}
Yon-Seo Kim and Kefeng Liu.
\newblock Virasoro constraints and {H}urwitz numbers through asymptotic
  analysis.
\newblock {\em Pacific J. Math.}, 241(2):275--284, 2009.

\bibitem[Kon92]{Kontsevich}
Maxim Kontsevich.
\newblock Intersection theory on the moduli space of curves and the matrix
  {A}iry function.
\newblock {\em Comm. Math. Phys.}, 147(1):1--23, 1992.

\bibitem[LX11]{LiuXu}
Kefeng Liu and Hao Xu.
\newblock The {$n$}-point functions for intersection numbers on moduli spaces
  of curves.
\newblock {\em Adv. Theor. Math. Phys.}, 15(5):1201--1236, 2011.

\bibitem[Mir07]{mirzakhani}
Maryam Mirzakhani.
\newblock Weil-{P}etersson volumes and intersection theory on the moduli space
  of curves.
\newblock {\em J. Amer. Math. Soc.}, 20(1):1--23, 2007.

\bibitem[MS08]{MulaseSafnuk}
Motohico Mulase and Brad Safnuk.
\newblock Mirzakhani's recursion relations, {V}irasoro constraints and the
  {K}d{V} hierarchy.
\newblock {\em Indian J. Math.}, 50(1):189--218, 2008.

\bibitem[Oko02]{OkounkovMain}
Andrei Okounkov.
\newblock Generating functions for intersection numbers on moduli spaces of
  curves.
\newblock {\em Int. Math. Res. Not.}, (18):933--957, 2002.

\bibitem[OP09]{okounkovpand}
A.~Okounkov and R.~Pandharipande.
\newblock Gromov-{W}itten theory, {H}urwitz numbers, and matrix models.
\newblock In {\em Algebraic geometry---{S}eattle 2005. {P}art 1}, volume~80 of
  {\em Proc. Sympos. Pure Math.}, pages 325--414. Amer. Math. Soc., Providence,
  RI, 2009.

\bibitem[Sha03a]{ShaIMRN03}
S.~V. Shadrin.
\newblock Geometry of meromorphic functions and intersections on moduli spaces
  of curves.
\newblock {\em Int. Math. Res. Not.}, (38):2051--2094, 2003.

\bibitem[Sha03b]{ShaUMN03}
S.~V. Shadrin.
\newblock Hurwitz numbers of generalized polynomials, and cycles of two-point
  branchings.
\newblock {\em Uspekhi Mat. Nauk}, 58(1(349)):197--198, 2003.

\bibitem[Sha05]{Sha05}
S.~V. Shadrin.
\newblock Some relations for double {H}urwitz numbers.
\newblock {\em Funktsional. Anal. i Prilozhen.}, 39(2):91--94, 2005.

\bibitem[Sha06]{Sha06}
S.~V. Shadrin.
\newblock Combinatorics of binomial decompositions of the simplest {H}odge
  integrals.
\newblock In {\em Gromov-{W}itten theory of spin curves and orbifolds}, volume
  403 of {\em Contemp. Math.}, pages 153--165. Amer. Math. Soc., Providence,
  RI, 2006.

\bibitem[SZ08]{ShaZvo08}
Sergey Shadrin and Dimitri Zvonkine.
\newblock Intersection numbers with {W}itten's top {C}hern class.
\newblock {\em Geom. Topol.}, 12(2):713--745, 2008.

\bibitem[Wit91]{WittenConj}
Edward Witten.
\newblock Two-dimensional gravity and intersection theory on moduli space.
\newblock In {\em Surveys in differential geometry ({C}ambridge, {MA}, 1990)},
  pages 243--310. Lehigh Univ., Bethlehem, PA, 1991.

\bibitem[Wit92]{Witten}
Edward Witten.
\newblock On the {K}ontsevich model and other models of two-dimensional
  gravity.
\newblock In {\em Proceedings of the {XX}th {I}nternational {C}onference on
  {D}ifferential {G}eometric {M}ethods in {T}heoretical {P}hysics, {V}ol. 1, 2
  ({N}ew {Y}ork, 1991)}, pages 176--216. World Sci. Publ., River Edge, NJ,
  1992.

\bibitem[Zho13]{Zhou}
Jian Zhou.
\newblock Topological recursions of {E}ynard-{O}rantin type for intersection
  numbers on moduli spaces of curves.
\newblock {\em Lett. Math. Phys.}, 103(11):1191--1206, 2013.

\bibitem[{Zho}15]{ZhouEmergent}
Jian {Zhou}.
\newblock {Emergent Geometry and Mirror Symmetry of A Point}.
\newblock {\em arXiv e-prints}, arXiv:1507.01679, July 2015.

\end{thebibliography}

\end{document}